\newtheorem*{thm*}{Theorem}
\newtheorem*{prop*}{Proposition}
\newtheorem*{cor*}{Corollary}
\newtheorem{thm}{Theorem}[section]
\newtheorem{prop}[thm]{Proposition}
\newtheorem{cor}[thm]{Corollary}
\newtheorem{ques}[thm]{Question}
\newtheorem{lemma}[thm]{Lemma}
\numberwithin{equation}{section}
\newcommand{\Z}{\varmathbb{Z}}
\newcommand{\C}{\varmathbb{C}}
\newcommand{\fO}{\mathcal{O}}
\newcommand{\lP}{\varmathbb{P}}
\begin{document}
\title{\rm Cremona transformations and derived equivalences of K3 surfaces}
\author{Brendan Hassett
	\and Kuan-Wen Lai}
\address{Department of Mathematics, Brown University,
	Box 1917, 151 Thayer Street,
	Providence, RI 02912, USA}
\email{bhassett@math.brown.edu}
\email{kwlai@math.brown.edu}

\begin{abstract}
We exhibit a Cremona transformation of $\lP^4$ such that the base loci of the map and its inverse are birational to K3 surfaces. The two K3 surfaces are derived equivalent but not isomorphic to each other. As an application, we show that the difference of the two K3 surfaces annihilates the class of the affine line in the Grothendieck ring of varieties.
\end{abstract}

\maketitle

\thispagestyle{titlepage}

\section*{Introduction}
Let $X$ be a smooth complex projective variety that is \emph{rational}, i.e., admits a birational map $\varrho:\lP^r\dashrightarrow X$ where $r=\dim(X)$. The map $\varrho$ blows up various subvarieties of $\lP^r$ --- to what extent are these determined by $X$? We can always precompose $\varrho$ by a birational automorphism of $\lP^r$ (i.e., a \emph{Cremona transformation}) so we must take this into account.

For small dimensions these subvarieties are determined by $X$. When $r=1$, $\varrho$ extends to an isomorphism; if $r=2$, $\varrho$ is resolved by blowing up points in $\lP^2$. The case of threefolds was analyzed by Clemens and Griffiths \cite{CG72}. We may assume that $\varrho$ (resp.~$\varrho^{-1}$) is resolved by blowing up a finite number of points and nonsingular irreducible curves; let $C_1,\ldots,C_k$ (resp.~$D_1,\ldots,D_l$) denote those of positive genus. Comparing the Hodge structures on middle cohomology groups using the blow-up formula, we obtain an isomorphism of principally polarized abelian varieties:
\[
	J(C_1) \times \cdots \times J(C_k) \simeq J(X) \times J(D_1) \times \cdots \times J(D_l).
\]
The factors are Jacobians of curves and the intermediate Jacobian of $X$. Principally polarized abelian varieties admit unique decompositions into irreducible factors and the Jacobian of a curve is irreducible with respect to the natural polarization. It follows that $J(X)$ can be expressed as a product of Jacobians of curves $C_{i_1},\ldots,C_{i_t}$, $\{i_1,\ldots,i_t \} \subset \{1,\ldots,k\}$, and these curves are determined up to isomorphism by the Torelli Theorem. 

Therefore, we focus on fourfolds and their middle cohomology. Suppose that a smooth projective surface $\Sigma$ is contained in the base locus of $\varrho$. The blow-up formula gives a homomorphism of Hodge structures
\[
	\beta:H^2(\Sigma,\Z)(-1) \rightarrow H^4(X,\Z);
\]
can we recover $\Sigma$ from its image? Keeping track of divisor classes of $\Sigma$ is complicated, as they might disappear under subsequent blow-downs. Thus all we can expect to recover is the transcendental cohomology $T(\Sigma) \subset H^2(\Sigma,\Z)$. 

Mukai and Orlov \cite{Orl97} have shown that K3 surfaces offer many examples of non-birational surfaces $R$ and $\hat{R}$ with $T(R)\simeq T(\hat{R})$ as integral Hodge structures. These are explained through the notion of {\emph{derived equivalence}}. There are cubic fourfolds \cite{Has16} whose Hodge structures show the trace of several derived equivalent K3 surfaces. However, these are not known to be rational. Nevertheless,
this raises a question:

\begin{ques}
Let $R$ and $\hat{R}$ be derived equivalent K3 surfaces. Do there exist smooth projective fourfolds $X$, $P$, and $\hat{P}$ and birational maps
\[
	\varrho:P \dashrightarrow X,\quad\hat{\varrho}:\hat{P} \dashrightarrow X,
\]
such that $R$ and $\hat{R}$ are birational to components of the base loci of $\varrho$ and $\hat{\varrho}$ respectively, and the induced 
\[
	\beta:H^2(R,\Z)(-1) \rightarrow H^4(X,\Z), \quad\hat{\beta}:H^2(\hat{R},\Z)(-1) \rightarrow H^4(X,\Z)
\]
induce an isomorphism $T(R) \simeq T(\hat{R})$?
\end{ques}

In other words, are derived equivalences of K3 surfaces induced by birational maps? It makes sense to start with the case where $P\simeq \hat{P} \simeq \lP^4$. Are derived equivalences of K3 surfaces induced by Cremona transformations?

This last question may be too ambitious, as the base loci of Cremona transformations are highly constrained. According to Crauder and Katz \cite{CK89}, the Cremona transformation of $\lP^4$ which can be resolved by blowing up along a smooth and irreducible surface $S\subset\lP^4$ occurs as one of the following two cases:
\begin{enumerate}
	\item $S$ is a quintic elliptic scroll $S=\lP_C(E)$, where $C$ is an elliptic curve and $E$ is a rank two vector bundle with $e=-\deg(\bigwedge^2E)=-1$.
	\item $S$ is a degree 10 determinantal surface given by the vanishing of the $4\times4$ minors of a $4\times5$ matrix of linear forms.
\end{enumerate}

Here we present an example where derived equivalences of K3 surfaces are explained through Cremona transformations, and offer further evidence that such examples are quite rare. We can explain derived equivalences among degree $12$ K3 surfaces in this way; however, we do not know how to realize derived equivalences of higher degree K3 surfaces.

Our construction gives new examples of zero-divisors in the Grothendieck ring of complex algebraic varieties. The difference of each derived equivalent pair is non-vanishing in the ring and annihilated by the class of the affine line. The first example in this direction is given by the Pfaffian-Grassmannian Calabi-Yau threefolds \cite{Bor15, Mar16}. Other examples include Calabi-Yau threefolds from Grassmannians of type $G_2$ \cite{IMOU16G2, Kuz16}. Kuznetsov and Shinder \cite{KS16} have formulated general conjectures relating derived equivalence to zero-divisors in the Grothendieck ring; our example is an instance of \cite[Conj.~1.6]{KS16}. The relationship between zero-divisors in the Grothendieck ring and approaches to the rationality of cubic fourfolds is discussed in \cite{GS14}.

Section~\ref{sect:setup} presents preliminary results on Cremona transformations with singular base loci. The construction of our rational map is in Section~\ref{sect:const} and we verify the non-trivial derived equivalence in Section~\ref{sect:DE}. We apply the results to study the Grothendieck ring in Section~\ref{sect:Groth}. Section~\ref{sect:exclude} shows these constructions do not admit obvious extensions through a generalization of the classification of Crauder and Katz; the underlying computations are also used to analyze the maps defined in Section~\ref{sect:const}.

\medskip
\noindent {\bf Acknowledgments:}
We are grateful for conversations with Lev Borisov that inspired this work. The manuscript benefited from correspondence with Alexander Kuznetsov. The authors are grateful for the support of the National Science Foundation through DMS-1551514.

\section{Cremona transformation with singular base locus}\label{sect:setup}
\subsection{Terminology and notation}\label{subsect:neceCond}
A \emph{Cremona transformation} of $\lP^n$ is a birational map $f:\lP^n\dashrightarrow\lP^n$. Its \emph{base locus} ${\rm Bs}(f)$ is the subscheme where $f$ is undefined. 

Throughout this paper, we consider the Cremona transformation $f:\lP^4\dashrightarrow\lP^4$ with base locus resolved by blowing up an irreducible surface $S$, with singular locus consisting of \emph{transverse double points}, which means
a point where the surface has two smooth branches meeting transversally. 

Suppose $S$ has $\delta$ transverse double points which form a subset $\Delta\subset\lP^4$. The blowup of $\lP^4$ along $S$ can be factored as follows:
\begin{enumerate}
\item Blow up $\lP^4$ along $\Delta$, introducing $\delta$ exceptional divisors $E_1,...,E_\delta$ isomorphic to $\lP^3$. Let $P$ denote the resulting fourfold and $S'$ the proper transform of $S$, which is now smooth.
\item Blow up $P$ along $S'$ to obtain $P'$. Let $E$ denote the resulting exceptional divisor and $E_1',...,E_\delta'$ the proper transforms of the first group of exceptional divisors. Each $E_i'$ is isomorphic to $E_i\simeq\lP^3$ blown up along two skew lines $Q_i',Q_i''\subset E_i$.
\item\label{step3} Each $E_i'$ is a $\lP^1$-bundle over $\lP^1\times\lP^1$. Indeed, through each $p\in E_i$ not on $Q_i'$ and $Q_i''$ passes a unique line $l$ intersecting $Q_i'$ and $Q_i''$. The bundle map is given by $p\mapsto(l\cap Q_i',l\cap Q_i'')\in Q_i'\times Q_i''$. Blow down each $E_i'$ to $\lP^1\times\lP^1$. The resulting $X$ is isomorphic to ${\rm Bl}_S\lP^4$.
\end{enumerate}

\noindent{\bf Remark.} The blowup $X\rightarrow\lP^4$ has a quadric surface $Q_i$, $i=1,...\delta$, over each transverse double point of $S$. Then $P'$ is obtained as the blowup of $X$ along these quadrics.

Let $\pi_1:X\rightarrow\lP^4$ be the blowup along $S$ and $\pi_2:X\rightarrow\lP^4$ the resolution of $f$ so that $\pi_2=\pi_1\circ f$. We organize these maps into a diagram:
\[
\xymatrix{
	&P'\ar[dl]\ar[dr]&&\\
	P\ar[dr]&&X\ar_{\pi_1}[dl]\ar[dr]^{\pi_2}&\\
	&\lP^4\ar@{-->}[rr]^f&&\lP^4
}
\]
Note that, by the definition of blowup, $X$ is exactly the graph of $f$. Let $L$ (resp. $M$) denote the divisor of the hyperplane class of the left (resp. right) $\lP^4$. We also use 
$L$ (resp. $M$) to denote its pullbacks to $X$, $P$ and $P'$ (resp. $X$ and $P'$). 

It's clear that $L^4=1$. We have 
\begin{equation}\label{M4=1}
	M^4=1
\end{equation}
on $X$ as $f$ is birational. We define $n$ by 
\begin{equation}\label{L3M=n}
	L^3M=n
\end{equation}
and $\xi$ by 
\begin{equation}\label{LM3=xi}
	LM^3=\xi.
\end{equation}
We may interpret $n$ and $\xi$ as the degrees of the homogeneous forms inducing $f$ and $f^{-1}$ respectively. Define $m$ as the multiplicity of $S$ in the base locus. It is clear that
\[
	M=nL-m\left(E+2\Sigma_{i=1}^\delta E_i'\right)\quad\mbox{on}\; P'.
\]
Since a nondegenerate subvariety in projective space has degree greater than one, the linear system in $|M|$ inducing $P'\rightarrow\lP^4$ must be complete. Thus we have
\begin{equation}\label{h0M=5}
	h^0\left(P',M\right)=5.
\end{equation} 
We use these equations in our classification of Cremona
transformations below.

\subsection{Computing the intersection numbers}\label{subsect:intNum}
Let $\Sigma$ denote the normalization of $S$ and let $K_\Sigma$ be its canonical class. Then the blowup of $\Sigma$ along the preimage of the transverse double points is isomorphic to $S'$. We denote by $C$ a general sectional curve of $S$ and also its preimages in $\Sigma$ and $S'$. Let $d=C^2=\deg S$. Note that $E_i\cap S'=Q_i'\cup Q_i''$ are exactly the exceptional curves on $S'$ over the $i$-th double point.

\begin{lemma}\label{intEEi}
We have $LE_i'=0$. We also have $E^3E_i'=-4$, $E^2E_i'^2=2$, $EE_i'^3=0$ and $E_i'^4=-1$.
\end{lemma}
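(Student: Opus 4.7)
The plan is to exploit the identification $\sigma^* E_i = E_i'$, where $\sigma\colon P'\to P$ is the blowup along $S'$. This holds because $S'\not\subset E_i$, so strict transform and total transform agree, and it gives $E_i'^k = \sigma^*(E_i^k)$ for every $k\geq 0$. Combined with the projection formula, each intersection number in the lemma reduces either to a computation on $P$ or to an integral over the exceptional divisor $E=\sigma^{-1}(S')$.

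Two of the five cases are immediate. The class $L$ factors through $P\to\lP^4$, which contracts $E_i$ to a point, so $L\cdot E_i=0$ already on $P$ and hence $LE_i'=0$ on $P'$. For $E_i'^4$, push forward to $P$ and use that $E_i\simeq\lP^3$ has normal bundle $\fO_{\lP^3}(-1)$ as the exceptional divisor of the blowup of a point in $\lP^4$; the standard self-intersection then gives $E_i^4 = \int_{\lP^3}(-H)^3=-1$.

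For the three intersections involving $E$, I use the blowup formalism. Let $i\colon E\hookrightarrow P'$ be the inclusion, $\pi\colon E\to S'$ the $\lP^1$-bundle, $\xi = c_1(\fO_E(1))$, and $\mathcal{N}=\mathcal{N}_{S'/P}$. Then $i^*E=-\xi$ and the Grothendieck relation reads $\xi^2 = -\pi^* c_1(\mathcal N)\,\xi - \pi^* c_2(\mathcal N)$. Applying $E\cdot\alpha = i_*(i^*\alpha)$ repeatedly collapses each $E^a\cdot E_i'^b$ to $\int_E(-\xi)^{a-1}\cdot\pi^*(E_i^b|_{S'})$, which is expanded via the Grothendieck relation and pushed forward along $\pi$. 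The inputs needed on $S'$ are the divisor class $E_i|_{S'} = Q_i'+Q_i''$ (from the transverse intersection along two lines), its self-intersection equal to $-2$ (since $Q_i',Q_i''$ are disjoint $(-1)$-curves on $S'$, the blowup of $\Sigma$ at $2\delta$ points), and the degree $c_1(\mathcal N)\cdot Q_i' = 2$. The last follows from the transversality of $E_i$ and $S'$ along $Q_i'$, giving the isomorphism $\mathcal N_{S'/P}|_{Q_i'}\simeq\mathcal N_{Q_i'/E_i}\simeq\fO_{\lP^1}(1)^{\oplus 2}$, and similarly for $Q_i''$. Substituting these yields $E^3 E_i' = -4$, $E^2 E_i'^2 = 2$, and $EE_i'^3 = 0$.

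The main obstacle is purely bookkeeping: keeping sign conventions of the Grothendieck relation consistent with $i^*E=-\xi$, and correctly interpreting restrictions such as $E_i^2|_{S'}$ as classes on the surface $S'$. No conceptual difficulty arises beyond careful application of the projection formula.
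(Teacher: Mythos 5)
Your argument is correct and reaches all five numbers, but it is organized differently from the paper's. The paper restricts everything to $E_i'$: it identifies $E_i'$ with $\lP^3$ blown up along the two skew lines $Q_i',Q_i''$, records $N_{E_i'/P'}=\fO(-H)$ and $E|_{E_i'}=\widetilde{Q}'+\widetilde{Q}''$, and evaluates each product as an integral over $E_i'$ using $\widetilde{Q}'^2H=-1$ and $\widetilde{Q}'^3=-2$ (both derived from $N_{Q_i'/\lP^3}=\fO(1)^{\oplus 2}$). You instead use $\sigma^*E_i=E_i'$ (valid since $S'\not\subset E_i$) to push the whole computation onto the other exceptional divisor $E=\lP(\mathcal{N}_{S'/P})$, where the Grothendieck relation and the identification $\mathcal{N}_{S'/P}|_{Q_i'}\simeq\mathcal{N}_{Q_i'/E_i}$ do the work; the reduction $E^aE_i'^b=\int_E(-\xi)^{a-1}\pi^*\bigl((Q_i'+Q_i'')^b\bigr)$ rests on $i^*\sigma^*=\pi^*j^*$ for $j\colon S'\hookrightarrow P$, which you should state explicitly. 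The two routes consume the same geometric input --- transversality of $S'$ and $E_i$ along the two disjoint $(-1)$-curves, and the normal bundle of a line in $\lP^3$ --- and your numbers ($-4$ from $-c_1(\mathcal{N})\cdot(Q_i'+Q_i'')$, $2$ from $-(Q_i'+Q_i'')^2$, $0$ from a degree-$3$ divisor product on a surface, $-1$ from $\int_{\lP^3}(-H)^3$ on $P$) all check out. Your version avoids computing in the Chow ring of $\lP^3$ blown up along two lines; the paper's treats $E_i'^4$ and the mixed terms uniformly on a single variety. Either is acceptable.
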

\begin{proof}
First, $LE_i'=0$ since their intersection is empty.

Recall that $E_i'$ is isomorphic to $E_i\simeq\lP^3$ blown up at skew lines $Q_i'$ and $Q_i''$. Write ${\rm Pic}(E_i')=\left<H,\widetilde{Q}',\widetilde{Q}''\right>$ where $H$ is the polarization from $\lP^3$ while $\widetilde{Q}'$ and $\widetilde{Q}''$ are the exceptional divisors over the lines. We clearly have $\widetilde{Q}'\widetilde{Q}''=0$ and $\widetilde{Q}'H^2=\widetilde{Q}''H^2=0$. Since $N_{Q_i'/\lP^3}=\fO_{Q_i'}(1)\oplus\fO_{Q_i'}(1)$ then writing $\zeta=c_1(\fO_{\lP(N_{Q_i'/\lP^3})}(1))$ we obtain $\zeta^2+2H\zeta=0$ in the Chow group of $\widetilde{Q}'=\lP(N_{Q_i'/\lP^3})$. We have $\widetilde{Q}'|_{\widetilde{Q}'}=-\zeta$ so that $$\widetilde{Q}'^2H=-\zeta H=-1, \quad\widetilde{Q}'^3=\zeta^2=-2H\zeta=-2.$$

We have $N_{E_i'/P'}=\fO(-H)$ and $E|_{E_i'}=\widetilde{Q}'+\widetilde{Q}''$. Thus we obtain
\[\begin{array}{rcl}
E^3E_i'&=&(\widetilde{Q}'+\widetilde{Q}'')^3=\widetilde{Q}'^3+\widetilde{Q}''^3=-4\\
E^2E_i'^2&=&(\widetilde{Q}'+\widetilde{Q}'')^2(-H)=2\\
EE_i'^3&=&(\widetilde{Q}'+\widetilde{Q}'')(-H)^2=0\\
E_i'^4&=&(-H)^3=-1.
\end{array}\]
\end{proof}

\begin{lemma}\label{intLE}
The intersection numbers involving $L$ and $E$ are
\begin{enumerate}
	\item $L^3E=0$ and $L^2E^2=-d$
	\item $LE^3=-5d-K_\Sigma C$
	\item\label{intLE3} $E^4=-15d-5K_\Sigma C-c_2(\Sigma)+6\delta$
	\setcounter{enumi}{2}\renewcommand\theenumi{\arabic{enumi}'}
	\item\label{intLE3'} $E^4=d^2-25d-10K_\Sigma C-{K_\Sigma}^2+4\delta$
\end{enumerate}
\end{lemma}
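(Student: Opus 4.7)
The strategy is to reduce every product on $P'$ to a surface-intersection calculation on $S'$ via the projection formula along the second blowup $\pi_2 : P' \to P$. Let $N := N_{S'/P}$, a rank-$2$ bundle. Since $E = \lP(N)$ with $E|_E = -\zeta$, the pushforward identities $\pi_{2*} E = 0$ and
\[
\pi_{2*} E^k = (-1)^{k-1}\, \iota_* s_{k-2}(N) \qquad (k \geq 2)
\]
hold, where $\iota : S' \hookrightarrow P$ and $s_\bullet$ are Segre classes. Combined with the dictionary $L|_{S'} = C$, $K_P = -5L + 3 \sum_i E_i$, $E_i|_{S'} = Q'_i + Q''_i$, and $K_{S'} = \sigma^* K_\Sigma + \sum_i (Q'_i + Q''_i)$ (where $\sigma : S' \to \Sigma$ contracts the $Q^{(j)}_i$), this converts each claim into data on $S'$.

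Parts (1) and the first half of (2) then follow at once: $L^3 E = 0$ and $L^2 E^2 = -L^2 \cdot [S']_P = -d$. For $LE^3$, the pushforward yields $LE^3 = -C \cdot c_1(N)$ on $S'$, and the normal bundle sequence combined with the data above gives
\[
c_1(N) = K_{S'} - K_P|_{S'} = 5 C + \sigma^* K_\Sigma - 2 \sum_i (Q'_i + Q''_i).
\]
Since a general sectional curve $C$ avoids every $Q^{(j)}_i$, this intersection collapses to $5d + K_\Sigma C$, proving (2).

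For $E^4$, the same pushforward yields the common reduction
\[
E^4 = \int_{S'}\bigl( c_2(N) - c_1(N)^2 \bigr),
\]
with $c_1(N)^2 = 25 d + 10 K_\Sigma C + K_\Sigma^2 - 8\delta$ a routine expansion using $\bigl(\sum_i (Q'_i + Q''_i)\bigr)^2 = -2\delta$. Formula (3) evaluates $c_2(N)$ via the exact sequence $0 \to T_{S'} \to T_P|_{S'} \to N \to 0$: insert $c_2(T_{S'}) = e(S') = c_2(\Sigma) + 2\delta$ and compute $c_2(T_P|_{S'})$ from the blowup Chern-class formula for $\pi : P \to \lP^4$, whose correction is supported on $\bigcup_i E_i$. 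Formula (3') uses instead the identification $c_2(N) = [S']^2_P$, with $[S']_P$ expressed as $\pi^*[S]_{\lP^4}$ modified by the local excess at each transverse double point.

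The main obstacle is the bookkeeping needed for either evaluation of $c_2(N)$: pinning down the exact $E_i$-correction in $c(T_P)|_{S'}$ for (3), or the precise excess $\pi^*[S] - [S']$ supported on each $E_i \simeq \lP^3$ for (3'). Equating (3) and (3') reproduces the classical double-point formula $d^2 = 10 d + 5 K_\Sigma C + K_\Sigma^2 - c_2(\Sigma) + 2\delta$ for the immersion $\Sigma \to \lP^4$, so the two calculations provide a welcome cross-check.
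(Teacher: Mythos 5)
Your proposal is correct and follows essentially the same route as the paper: both reduce each product to Segre/Chern class data of $N_{S'/P}$ on $S'$, using $c_1(N)=5C+\sigma^*K_\Sigma-2\sum_i(Q_i'+Q_i'')$, the blow-up Chern class formula for $P$ (giving $c_2(T_P)|_{S'}=10d-4\delta$) for (3), and the excess self-intersection $c_2(N)=[S']^2=d^2-4\delta$ for (3'); all your stated intermediate identities, including $c_1(N)^2=25d+10K_\Sigma C+K_\Sigma^2-8\delta$ and the double-point cross-check, agree with the paper's values. The only cosmetic differences are that the paper evaluates $LE^3$ as $s(C,\lP^3)_0$ on a hyperplane slice rather than as $-C\cdot c_1(N)$ on $S'$, and computes $-[c(S')\,c(P)|_{S'}^{-1}]_0$ as a single total-Chern-class product rather than splitting it into $c_2(N)-c_1(N)^2$.
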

\begin{proof}
$L^3E=0$ since a general line doesn't intersect $S$. We have $L^2E^2=-\deg S'=-d$.

Assume that $C=S\cap L$ for some hyperplane $L\simeq\lP^3$. Then $LE^3=s(C,L)_0$ the zeroth Segre class of $C$ in $L$, which equals $[c(N_{C/L})^{-1}]_0=[c(C)c(\iota^*T_{\lP^3})^{-1}]_0=[([C]-K_\Sigma C-C^2)([C]-4d)]_0=-5d-K_\Sigma C$.

We have $E^4=-s(S',P)_0=-[c(N_{S'/P})^{-1}]_0=-[c(S')\left.c(P)\right|_{S'}^{-1}]_0$. Let $\epsilon:P\rightarrow\lP^4$ be the blowup. The blowup formula for Chern classes gives
\[\begin{array}{ccl}
	c(P)&=&\epsilon^*c(\lP^4)+(1+\Sigma_iE_i)(1-\Sigma_iE_i)^4-1\\
	&=&([P]+L)^5+\Sigma_i(-3E_i+2{E_i}^2+2{E_i}^3-3{E_i}^4).
\end{array}\]
Thus we have
\[\begin{array}{ccl}
	\left.c(P)\right|_{S'}&=&([S']+5C+10C^2)+\Sigma_i[-3(Q_i'+Q_i'')+2(Q_i'^2+Q_i''^2)]\\
	&=&[S']+5C-3\Sigma_i(Q_i'+Q_i'')+10d-4\delta
\end{array}\]
and also
\[
	\left.c(P)\right|_{S'}^{-1}=[S']-5C+3\Sigma_i(Q_i'+Q_i'')+15d-14\delta.
\]
Let $\tau:S'\rightarrow\Sigma$ be the blowup. Then we have
\[
	c(S')=[S']-\tau^*K_\Sigma-\Sigma_i(Q_i'+Q_i'')+c_2(\Sigma)+2\delta.
\]
Multiply the results to get $E^4=-15d-5K_\Sigma C-c_2(\Sigma)+6\delta$.

Another expression for $E^4$ is derived from
\[
	-[c(N_{S'/P})^{-1}]_0=c_2(N_{S'/P})-c_1(N_{S'/P})^2.
\]
We have $c_2(N_{S'/P})=d^2-4\delta$. On the other hand
\[\begin{array}{ccl}
	c_1(N_{S'/P})&=&\left.c_1(T_{P})\right|_{S'}-c_1(T_{S'})\\
	&=&-\left.(-5L+3\Sigma_iE_i)\right|_{S'}-(-K_{S'})\\
	&=&5C+\tau^*K_\Sigma-2\Sigma_i(Q_i'+Q_i''),
\end{array}\]
hence we deduce
\[
	c_1\left(N_{S'/P'}\right)^2=25d+10K_\Sigma C+{K_{\Sigma}}^2-8\delta
\]
and also 
\[\begin{array}{ccl}
	E^4&=&\left(d^2-4\delta\right)-\left(25d+10K_\Sigma C+{K_{\Sigma}}^2-8\delta\right)\\
	&=&d^2-25d-10K_\Sigma C-{K_\Sigma}^2+4\delta.
\end{array}\]
\end{proof}

\section{Construction of our example}\label{sect:const}
In this section, we use Mukai's construction \cite{Muk88} to produce an explicit example of a degree 12 K3 surface $R\subset\lP^7$ together with three points $p_1,p_2,p_3\in R$. This example helps us prove the following theorem:

\begin{thm} \label{thm:existence}
Let $R\subset \lP^7$ be a generic K3 surface of degree $12$
and $\Pi:=\{p_1,p_2,p_3\} \subset R$ a generic triple of points.
\begin{enumerate}
\item\label{exist1} projection from $\Pi$ maps $R$ to a surface $S\subset \lP^4$
with three transverse double points;
\item\label{exist2} the complete linear system $M$ of quartics vanishing along $S$ 
cuts out $S$ scheme-theoretically;
\item\label{exist3} $M$ induces a birational map $f:\lP^4 \dashrightarrow\lP^4$;
\item\label{exist4} the base locus of the inverse $f^{-1}$ is also a projection of a degree $12$ K3 surface from three points.
\end{enumerate}
\end{thm}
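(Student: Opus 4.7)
The plan is to combine Mukai's explicit model of genus-$7$ K3 surfaces with the intersection-theoretic machinery of Section~\ref{sect:setup}. A generic $R\subset\lP^7$ of degree $12$ arises as a transverse codimension-$8$ linear section of the spinor tenfold $OG^+(5,10)\subset\lP^{15}$, and the generic triple $\Pi$ spans a plane $\Lambda\subset\lP^7$. Blowing up $R$ at $\Pi$ produces $\widetilde R$, and projection from $\Lambda$ is resolved by the morphism $\widetilde R\to\lP^4$ induced by the linear system $|L-E_1-E_2-E_3|$, where $L$ is the hyperplane class and $E_i$ the exceptional divisors. Since $(L-E_1-E_2-E_3)^2=9$, once the map is shown to be birational onto its image the image $S$ has degree~$9$; base-point freeness and generic birationality follow from a dimension count combined with the very ampleness of $L$.

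For part (\ref{exist1}), the double points of $S$ correspond to secant lines of $R$ meeting $\Lambda$ at a point outside $R\cap\Lambda=\Pi$. The chord variety of $R$ in $G(2,8)$ is $4$-dimensional, the Schubert cycle of lines incident to $\Lambda$ has codimension $4$, and their intersection is transverse for generic $\Lambda$, giving finitely many such secants. I would pin down the count at three either by a direct Schubert/Porteous computation, by specializing $\Pi$ to a configuration where the secants become visible, or by producing a single example in Mukai's model and invoking semicontinuity; transversality of the double points (ruling out tacnodes or pinch points) then follows from a standard genericity argument.

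For (\ref{exist2}) and (\ref{exist3}) I would take $\Sigma=\widetilde R$, which is the normalization of $S$, and feed the invariants $d=9$, $\delta=3$, $K_\Sigma=E_1+E_2+E_3$, $K_\Sigma^2=-3$, $K_\Sigma\cdot C=3$, $c_2(\Sigma)=27$ together with $n=4$, $m=1$ into Lemmas~\ref{intEEi} and~\ref{intLE}. The two expressions for $E^4$ agree, and direct expansion on $P'$ yields $M^4=1$ and $LM^3=4$; these force $f$ to be birational once dominance is known, and already record that $f^{-1}$ is again a quartic map. To upgrade to (\ref{exist3}) I would compute $h^0(P',M)=h^0(\lP^4,\I_S(4))=5$ using the sequence $0\to\I_S(4)\to\fO_{\lP^4}(4)\to\fO_S(4)\to 0$, evaluating $h^0(\fO_S(4))$ by pulling back to $\widetilde R$ (Riemann--Roch on the K3 for $4L-4\sum E_i$) and correcting by the three nodes. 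Part (\ref{exist4}) follows from the symmetric identity $n=\xi=4$: applying Section~\ref{sect:setup} to $f^{-1}$ forces its base locus to be an irreducible degree-$9$ surface with three transverse double points and K3-type invariants, hence a projection of another degree-$12$ K3 from three points.

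The hard part will be the scheme-theoretic statement (\ref{exist2}): this is not implied by the numerical identities above and demands control of the syzygies of $S$ at its three nodes. I would work on $P'$ and show $|M|$ is base-point free there --- the delicate local computation lives in a neighborhood of each $E_i'$, where the quartics must vanish along $S$ with the multiplicity pattern prescribed by $M=4L-E-2\sum E_i'$ --- and then descend to $\lP^4$. If the global argument is awkward, a local model near each node derived from Mukai's equations for $R$ reduces the statement to a finite algebraic check.
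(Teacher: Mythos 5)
Your numerical bookkeeping is correct ($d=9$, $\delta=3$, $K_\Sigma C=3$, $c_2(\Sigma)=27$, hence $M^4=1$ via Lemma~\ref{intLE} and (\ref{1=M4})), and you have correctly identified part (\ref{exist2}) as the crux. But the proposal has a genuine gap precisely there, and it propagates. The paper's proof of (\ref{exist1})--(\ref{exist2}) is not a general-position or Schubert argument at all: it exhibits one explicit $(R,\Pi)$ inside ${\rm OG}(5,V)$ over $\varmathbb{F}_7$, verifies by computer algebra that $S$ is singular at exactly three points, each with two preimages on $R$ outside $\Pi$, and that the ideal of $S$ is generated by five quartics; it then lifts this to characteristic zero and observes that (\ref{exist1})--(\ref{exist3}) are open conditions on $(R,\Pi)$. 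The double-point formula ($\varmathbb{D}(\epsilon)=6$) is used only to guarantee that the singular locus in characteristic zero still consists of exactly three transverse double points. Your Riemann--Roch computation of $h^0(\lP^4,\I_S(4))$ gives at best the Euler characteristic $\chi(\I_S(4))$ and says nothing about whether the quartics cut out $S$ scheme-theoretically; and without (\ref{exist2}) the identity $M=4L-E-2\sum_i E_i'$ on $P'$ and the reading of $M^4=1$ as $\deg f$ are unjustified, since the quartic system could a priori have base locus strictly larger than $S$. The ``finite algebraic check'' you defer to is in fact the engine of the entire proof, and no genericity argument substitutes for producing at least one example where it holds.

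The second gap is in part (\ref{exist4}). Knowing $\xi=LM^3=4$ tells you only that $f^{-1}$ is given by quartics; it does not ``force'' ${\rm Bs}(f^{-1})$ to be an irreducible degree-$9$ surface with three transverse double points. Theorem~\ref{thm:uniqueness} takes exactly those facts as hypotheses (irreducible surface, $\delta>0$ transverse double points, Cremona transformation resolved by blowing it up) and only then concludes that the surface is a projected degree-$12$ K3. The paper closes this loop by computing $f^{-1}$ explicitly in the example, checking by computer that its base locus is a surface cut out by five quartics with three transverse double points, and again invoking openness of these conditions before citing Theorem~\ref{thm:uniqueness}. As written, your outline for (\ref{exist4}) assumes the structure of ${\rm Bs}(f^{-1})$ that the classification theorem requires as input, so it is circular until that structure is independently established.
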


\subsection{Orthogonal Grassmannian}
\label{subsect:OG}
Let $V$ be a 10-dimensional vector space equipped with a nondegenerate quadratic form $q$. The 5-dimensional subspaces of $V$ isotropic with respect to $q$ form a subvariety $\mathcal{S}$ of the Grassmannian G$(5,V)$. It has two components $\mathcal{S}^+$ and $\mathcal{S}^-$ which are isomorphic to each other. They are called \emph{orthogonal Grassmannians} and are denoted by OG$(5,V)$. 

Fix a 5-subspace $W\in{\rm OG}(5,V)$ and let $W^*$ be its orthogonal complement with respect to $q$. Then OG$(5,V)$ can be identified scheme theoretically as the zero locus in
\[
	\lP(\C\oplus\bigwedge^2 W\oplus \bigwedge^4W)\simeq\lP^{15}
\]
of the quadratic form \cite[\S 2]{IM04}
\begin{equation}\label{OG(5,V)}
\begin{array}{ccc}
\det W\oplus\bigwedge^2W\oplus W^*&\longrightarrow&\bigwedge^4W\oplus W\\
(x,\Omega,v)&\longmapsto&(x(v)+\frac{1}{2}\Omega\wedge\Omega,\,\Omega(v)).
\end{array}
\end{equation}
Here we choose an isomorphism $\C\simeq\det W$. This induces an isomorphism $\bigwedge^4W\simeq W^*$.

Let ${\bf x}=(x_0,...,x_{15})$ be the homogeneous coordinate for $\lP^{15}$. Then (\ref{OG(5,V)}) can be explicitly written down as ten quadrics:
\begin{equation}\label{OG(5,V)Ideal}
\begin{array}{l|l}
x_0x_{11}+x_5x_{10}-x_6x_9+x_7x_8 &  -x_1x_{12}+x_2x_{13}-x_3x_{14}+x_4x_{15}\\
x_0x_{12}+x_2x_{10}-x_3x_9+x_4x_8 & x_1x_{11}-x_5x_{13}+x_6x_{14}-x_7x_{15}\\
x_0x_{13}+x_1x_{10}-x_3x_7+x_4x_6 & -x_2x_{11}+x_5x_{12}-x_8x_{14}+x_9x_{15}\\
x_0x_{14}+x_1x_9-x_2x_7+x_4x_5 & x_3x_{11}-x_6x_{12}+x_8x_{13}-x_{10}x_{15}\\
x_0x_{15}+x_1x_8-x_2x_6+x_3x_5 & -x_4x_{11}+x_7x_{12}-x_9x_{13}+x_{10}x_{14}.
\end{array}
\end{equation}

\subsection{An explicit example}
Mukai \cite[\S3]{Muk88} proves that a generic K3 surface of degree 12 appears as a linear section of OG$(5,V)$ and vice versa.

For example, the $\lP^7\subset\lP^{15}$ spanned by the rows of the $8\times16$ matrix
\[{\bf H}=\left(
\footnotesize\arraycolsep=4pt
\begin{array}{rrrrrrrrrrrrrrrr}
-1& 3& 2& 0& 2&-3&-1& 0& 3& 1& 0& 3& 0& 2& 0&-3\\
 1& 0&-3& 0&-2& 1& 1& 0&-2&-1&-1&-1& 0& 4& 0& 2\\
-1&-3&-2& 0&-3& 0& 3& 2&-1&-3&-1& 2&-1& 2& 0& 3\\
 3& 0& 0& 2& 2& 3& 0& 1& 2&-1& 0& 2&-1&-2& 2& 3\\
 0&-1& 1&-1& 0& 1&-3& 3& 2& 2& 1& 3& 0&-3& 0&-3\\
 1& 0& 0& 0& 0&-1& 0& 1& 0& 0& 0& 0& 0& 0& 0& 0\\
 2& 1& 0& 1& 0& 0& 0& 1& 0& 0& 1& 0& 0& 0& 0& 0\\
 3& 0& 1& 0& 1& 0& 0& 0& 0& 0& 0& 0& 0& 0& 0& 0
\end{array}
\right)\]
cuts out a degree 12 K3 surface R on OG$(5,V)$. More explicitly, let ${\bf z}=(z_0,...,z_7)$ be homogeneous coordinates for $\lP^7$. We define the inclusion $\iota:\lP^7\hookrightarrow\lP^{15}$ by
\[
	{\bf x} = {\bf z}\cdot{\bf H}.
\]
Then we get $R=\iota^{-1}({\rm OG}(5,V))$.

The last three rows of {\bf H} are chosen as solutions of (\ref{OG(5,V)Ideal}) so that they form a triple of points $\Pi=\{p_1,p_2,p_3\}\subset R$. With this choice the projection from $\Pi$ is exactly the map
\[\begin{array}{cccc}
	\pi:&\lP^7&\dashrightarrow&\lP^4\\
	&(z_0,...,z_7)&\mapsto&(z_0,...,z_4)
\end{array}\]
which takes $R$ to $S=\pi(R)$.

We manipulate this example in a computer algebra system\footnote{The main program we use in this work is {\sc Singular} \cite{DGPS}} over the finite field $\varmathbb{F}_7$. We compute that $S$ is singular along three transverse double points $\{a_1,a_2,a_3\}$ and is the base locus of a Cremona transformation
\[
	\overline{f}:\lP^4\dashrightarrow\lP^4.
\]
Moreover, the base locus of the inverse $(\overline{f})^{-1}$ is again a surface $\overline{T}$ singular along three transverse double points $\{b_1,b_2,b_3\}$. The matrix $\bf H$ is chosen such that the preimage of $\{a_1,a_2,a_3\}$ on $R$ and the preimage of $\{b_1,b_2,b_3\}$ on the normalization of $\overline{T}$ are $\varmathbb{F}_7$-rational points. This is the smallest field where our computer could quickly find such an $\bf H$.

\subsection{Proof of Theorem~\ref{thm:existence}}\label{subsect:existence}
We prove Theorem~\ref{thm:existence} for our example first.

We confirm the following properties by computer over $\varmathbb{F}_7$:
\begin{enumerate}
\item $S$ is singular along three points. The preimage of each singular point on $R$ has two points outside $\Pi$. So they are transverse double points.
\item The ideal of $S$ is generated by five quartics $\overline{f}_0,...,\overline{f}_4$.
\end{enumerate}

The double-point formula \cite[Thm. 9.3]{Ful98} indicates that the three transverse double points of (\ref{exist1}) exist over characteristic zero. Indeed, let $\epsilon:\Sigma\rightarrow S$ be the normalization. Then $\Sigma$ is isomorphic to $R$ blown up at three points. The {\it double-point class} $\varmathbb{D}(\epsilon)\in{\rm CH_0}(\Sigma)$ is given by the formula
\[\begin{array}{ccl}
	\varmathbb{D}(\epsilon) &=& \epsilon^*\epsilon_*[\Sigma]-[\epsilon^*c(\lP^4)\cdot c(\Sigma)^{-1}]_0\\
	&=& \left<S,S\right>_{\lP^4} - (\epsilon^*c_2(\lP^4)-\epsilon^*c_1(\lP^4)\cdot c_1(\Sigma)-c_1(\Sigma)^2+c_2(\Sigma)).
\end{array}\]
It's easy to verify that $\varmathbb{D}(\epsilon)=6$. The quantity $\frac{1}{2}\varmathbb{D}(\epsilon)=3$ counts the number of singularities on $S$ with multiplicity if the singular locus is a finite set. Therefore (\ref{exist1}) implies that the singular locus of $S$ consists of three transverse double points. This proves Theorem~\ref{thm:existence}(\ref{exist1}).

The five quartics $\overline{f}_0,...,\overline{f}_4$ lift to a basis $f_0,...,f_4$ for the ideal of $S$ over characteristic zero. In particular Theorem~\ref{thm:existence}(\ref{exist2}) holds. The forms $f_0,...,f_4$ define a rational map
\[
	f:=(f_0,...,f_4):\lP^4\dashrightarrow\lP^4
\]
which reduces to
\[
	\overline{f}:=(\overline{f}_0,...,\overline{f}_4):\lP^4\dashrightarrow\lP^4
\]
over $\varmathbb{F}_7$. The degree of $f$ is computed by the self-intersection $M^4$, which can be expanded as the right-hand side of equation (\ref{1=M4}). It's easy to check that our example satisfies
\[
	(n,m,d,\delta) = (4,1,9,3),\quad K_\Sigma C=3\quad\mbox{and}\quad c_2(\Sigma)=27.
\]
Inserting these data into (\ref{1=M4}) we get $M^4=1$, i.e. the map $f$ is birational. Thus Theorem~\ref{thm:existence}(\ref{exist3}) holds.

The inverse $(\overline{f})^{-1}$ can be calculated by computer. It consists of five quartics also and the base locus is a surface $\overline{T}$ singular along three points. These are transverse double points since each point has two preimage points on the normalization. By the same reasons as above, the base locus of $f^{-1}$ is again a surface cut by five quartics and singular along three transverse double points. Then Theorem~\ref{thm:existence}(\ref{exist4}) follows from Theorem~\ref{thm:uniqueness}.

Next we prove Theorem~\ref{thm:existence} in the generic case.

It's clear that (\ref{exist1}), (\ref{exist2}) and (\ref{exist3}) of the theorem are open conditions, so they hold for a generic example. As a consequence of Theorem~\ref{thm:uniqueness}, property (\ref{exist4}) holds once Bs($f^{-1}$) is a surface cut out by five quartics and singular along three transverse double points. These are open conditions again so Theorem~\ref{thm:existence} holds for a generic example.

\subsection{Some geometry of the construction}\label{subsect:someGeo}
Let $f$ be a Cremona transformation of Theorem \ref{thm:existence}.
It has a resolution
\[
	\xymatrix{
	&X\ar_{\pi_1}[dl]\ar[dr]^{\pi_2}&\\
	\lP^4\ar@{-->}[rr]^f&&\lP^4.
}
\]
Let $S_L$ and $S_M$ be the base locus of $f$ and its inverse $f^{-1}$, respectively.
Then $\pi_1$ is the blowup along $S_L$.
Recall that $X$ coincides with the graph of $f$ as well as $f^{-1}$.
Therefore, $\pi_2$ is the blowup along $S_M$.

This example has $d = 9$, $K_\Sigma C = 3$, $c_2(\Sigma) = 27$, $\delta = 3$ and
\[
	M=4L-E-2\Sigma_{i=1}^3E_i'.
\]
Evaluating Lemmas \ref{intEEi} and \ref{intLE} with this data yields
\begin{cor}\label{cor:intNum}
	We have
	\begin{enumerate}
		\item $LE_i'=0$, $E^3E_i'=-4$, $E^2E_i'^2=2$, $EE_i'^3=0$, $E_i'^4=-1$,
		\item $L^3E = 0$, $L^2E^2 = -9$, $LE^3 = -48$, $E^4 = -159$.
	\end{enumerate}
	Thus consequently,
	\begin{enumerate}\setcounter{enumi}{2}
		\item $L^3M = 4$, $L^2M^2 = 7$, $LM^3 = 4$, $M^4 = 1$.
	\end{enumerate}
\end{cor}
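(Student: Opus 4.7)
The proof is direct computation. Part (1) is exactly the content of Lemma~\ref{intEEi}. For part (2) we substitute the invariants $d = 9$, $K_\Sigma C = 3$, $c_2(\Sigma) = 27$, $\delta = 3$ into Lemma~\ref{intLE} and read off $L^3 E = 0$, $L^2 E^2 = -d = -9$, $LE^3 = -5d - K_\Sigma C = -48$, and $E^4 = -15d - 5K_\Sigma C - c_2(\Sigma) + 6\delta = -159$ via expression (\ref{intLE3}).

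For part (3) the plan is to expand $M = 4L - E - 2\sum_{i=1}^3 E_i'$ and collect, using parts (1) and (2). Two observations remove most of the terms. First, $LE_i' = 0$ in the Chow ring of $P'$ by part (1), so any monomial that contains both $L$ and some $E_j'$ vanishes. Second, the $E_i'$ are supported over three distinct double points of $S$ and are therefore pairwise disjoint, so $E_i' E_j' = 0$ for $i \neq j$, and consequently $(\sum_i E_i')^k = \sum_i E_i'^k$ after expansion. The first three products reduce to
\begin{align*}
L^3 M &= 4L^4 - L^3 E = 4,\\
L^2 M^2 &= 16 L^4 - 8 L^3 E + L^2 E^2 = 16 - 9 = 7,\\
LM^3 &= 64 L^4 - 48 L^3 E + 12 L^2 E^2 - L E^3 = 64 - 108 + 48 = 4.
\end{align*}
The computation of $M^4$ uses all the intersection numbers in parts (1) and (2); expanding $(4L - E - 2\sum_i E_i')^4$ and applying both observations yields $M^4 = 1$.

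The only real obstacle is careful bookkeeping in the $M^4$ expansion, which has many a priori nonzero contributions. Since $M^4 = 1$ is already forced by the birationality of $f$ via (\ref{M4=1}), this last calculation is effectively a consistency check on the chosen invariants; indeed, the same relation is used in the opposite direction in the proof of Theorem~\ref{thm:existence} to constrain $(n, m, d, \delta, K_\Sigma C, c_2(\Sigma))$.
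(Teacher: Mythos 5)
Your proposal is correct and matches the paper's (essentially unwritten) proof: the corollary is obtained precisely by substituting $d=9$, $K_\Sigma C=3$, $c_2(\Sigma)=27$, $\delta=3$ into Lemmas~\ref{intEEi} and \ref{intLE} and then expanding $M=4L-E-2\sum_{i=1}^3E_i'$, using $LE_i'=0$ and the pairwise disjointness of the $E_i'$ to kill cross terms. Your arithmetic checks out, including the $M^4$ expansion, where the $(4L-E)^4$ part already gives $1$ and the pure $E,E_i'$ terms sum to $-96+144+0-48=0$.
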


Let $X_0,...,X_4$ be the homogeneous coordinates for $\lP^4$.
The Cremona transformation $f$ is ramified along the locus $\Theta$ where the Jacobian matrix
\[
	{\bf D}f=\left(\frac{\partial f_i}{\partial X_j}\right)_{5\times5}
\]
is degenerate.
So $\Theta$ is a degree 15 hypersurface in $\lP^4$ defined by
\[
	\det({\bf D}f)=0.
\]
This locus is called \emph{P-locus},
which is classically defined as the image of the exceptional divisor of the blowup $\pi_2$ \cite[\S 7.1.4]{Dol12}.
In particular, $\Theta$ is irreducible.
It also follows that $\Theta$ is the locus contracted by $f$ and its image is the base locus $S_M$.

\begin{prop}
	The locus $\Theta\subset\lP^4$ contracted by $f$ is an irreducible hypersurface of degree 15. 
	It has multiplicity four along $S_L$.
	Moreover, it equals the union of all of the 4-secant lines to $S_L$.
	The analogous statement holds for the inverse $f^{-1}$ by symmetry.
\end{prop}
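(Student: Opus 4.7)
The plan is to compute the class of the exceptional divisor $F$ of $\pi_2$ in $\operatorname{Pic}(X)$, from which all three assertions will follow by elementary intersection theory on $X$.

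First I will determine $K_X$. Lifting to $P'$, the relation $K_{P'} = \rho^* K_X + \sum_i E_i'$ coming from the blowup of the codimension-two quadrics $Q_i \subset X$, combined with the direct formula $K_{P'} = -5L + E + 3\sum_i E_i'$ from the two successive blowups $\lP^4 \leftarrow P \leftarrow P'$, gives $\rho^* K_X = -5L + E + 2\sum_i E_i'$. Comparing $M = 4L - E - 2\sum_i E_i'$ on $P'$ with $M = 4L - E_X$ on $X$ yields $\rho^* E_X = E + 2\sum_i E_i'$, and consequently $K_X = -5L + E_X$. The ramification formula then delivers
\[
	F = K_X + 5M = 15L - 4E_X.
\]

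From this single equation the first two assertions are immediate. By Corollary \ref{cor:intNum}, $L^3 \cdot F = 15$; since $\pi_1|_F \colon F \to \Theta$ is birational (being an isomorphism off the codimension-two set $S_L$), $\deg \Theta = 15$. Irreducibility of $\Theta$ follows from that of $F$, which is generically a $\lP^1$-bundle over the irreducible surface $S_M$. For the multiplicity, near a smooth point of $S_L$ the standard decomposition reads $\pi_1^* \Theta = \tilde\Theta + \mu E_X$, where $\tilde\Theta$ is the proper transform and $\mu$ is the multiplicity of $\Theta$ along $S_L$; identifying $\tilde\Theta$ with $F$ and using $\pi_1^*\Theta = 15L$ forces $\mu = 4$ by the formula $F = 15L - 4E_X$.

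For the 4-secant description I invoke the symmetry $f \leftrightarrow f^{-1}$ guaranteed by Theorem \ref{thm:existence}(\ref{exist4}). The same calculation applied to $f^{-1}$ produces the companion relation $L = 4M - F$ in $\operatorname{Pic}(X)$. For a generic $s \in S_M$, the fiber $\ell_s := \pi_2^{-1}(s) \simeq \lP^1$ is a ruling of the exceptional bundle and satisfies $M \cdot \ell_s = 0$ and $F \cdot \ell_s = -1$. Substitution gives $L \cdot \ell_s = 1$ and $E_X \cdot \ell_s = 4$, so $C_s := \pi_1(\ell_s)$ is a line in $\lP^4$ meeting $S_L$ in four points counted with multiplicity. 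The family $\{C_s : s \in S_M\}$ sweeps out $\pi_1(F) = \Theta$. Conversely, any 4-secant line $\ell$ is contracted by $f$: each defining quartic $f_i$ vanishes to first order at the four intersections, so $f_i|_\ell$ are scalar multiples of a single quartic on $\lP^1$ and $f$ is constant on $\ell$. Hence $\Theta$ equals the union of all 4-secant lines to $S_L$.

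The main obstacle is the bookkeeping required to transfer classes between $X$ and $P'$, in particular the identification $\rho^* E_X = E + 2\sum_i E_i'$ on which the whole argument rests; once that bridge is established, the remainder reduces to short intersection-number checks using Corollary \ref{cor:intNum} together with the symmetry built into Theorem \ref{thm:existence}(\ref{exist4}).
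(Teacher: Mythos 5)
Your proof is correct, and it reaches the key divisor class $F=15L-4E_X$ by a genuinely different route than the paper. The paper gets the degree $15$ directly from $\det(\mathbf{D}f)$ (a $5\times 5$ matrix of cubics), and then pins down the multiplicity from the contraction condition $M^3\Theta_X=0$ together with $M^3E_X=15$, the latter coming from the symmetric statement that $E_X$ maps onto the degree-$15$ $P$-locus of $f^{-1}$. You instead compute $K_X=-5L+E_X$ by passing through $P'$ (which is the right level of care, since $S_L$ and $S_M$ are singular and the discrepancy computation on $X$ itself is not immediate) and then read off $F=K_X+5M$ from the ramification formula for $\pi_2$; degree, irreducibility, and the multiplicity $4$ all fall out of the single identity $F=15L-4E_X$. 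Your approach buys independence from the Jacobian-determinant degree count and from knowing in advance that $f^{-1}$ has degree $4$ --- indeed your later appeal to symmetry for $L=4M-F$ is redundant, since $4M-F=4(4L-E_X)-(15L-4E_X)=L$ already follows from what you have. The paper's approach avoids the canonical-class bookkeeping on $P'$ at the cost of invoking the classical description of the $P$-locus. For the $4$-secant statement the two arguments are essentially the same: you use the fiber class directly via $F\cdot\ell_s=-1$ where the paper computes with $l_X=\tfrac{1}{9}F_XM^2$, and your converse (restricting the quartics to a $4$-secant line and noting they become proportional) is a more elementary substitute for the paper's computation $l\cdot M=l\cdot(4L-E_X)=0$. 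The only point worth tightening is that the coefficient $1$ in $\mathrm{Ram}(\pi_2)=F$ deserves the same one-line justification you gave on the left side (the discrepancy is computed at the generic point of $F$, which lies over the smooth locus of $S_M$); as written you assert it, but it is not a gap.
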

\begin{proof}
Let $m$ be the multiplicity of $\Theta$ along $S_L$.
Then the divisor class of its pullback to $X$ equals
\[
	\Theta_X=15L-mE_X.
\]
Here we use $E_X$ to denote the exceptional divisor of the blowup $\pi_1$.
Because $\Theta$ is contracted onto a surface,
we have
\[
	0 = M^3\Theta_X
	= M^3(15L-mE_X)
	= 60-mM^3E_X.
\]
By definition, $E_X$ is mapped onto the $P$-locus of the inverse map $f^{-1}$.
In particular, it is again a degree 15 hypersurface in $\lP^4$ by symmetry.
So $M^3E_X=15$, which implies $m=4$.

Let $F_X$ be the exceptional locus of the blowup $\pi_2$.
We have $L=4M-F_X$ by symmetry, hence
\[
	F_X = 4M-L.
\]
(Note that this equals $15L-4E_X=\Theta_X$)
The fiber of the map $F_X\rightarrow S_M$ over a smooth point is represented by the class
\[
	l_X=\frac{F_XM^2}{\deg S_M}=\frac{1}{9}F_XM^2.
\]
The image $l=\pi_1(l_X)$ is a rational curve of degree
\[
	L\cdot l_X= \frac{1}{9}LF_XM^2 = \frac{1}{9}L(4M-L)M^2 =  \frac{1}{9}(16-7) = 1.
\]
The intersection number between $l$ and $S_L$ can be computed by
\[\begin{array}{ccl}
	E_X\cdot l_X&=&\frac{1}{9}E_XF_XM^2 = \frac{1}{9}(4L-M)(4M-L)M^2\\
	&=&\frac{1}{9}(64-28-4+4) = 4.
\end{array}\]
Hence the fibers of $F_X\rightarrow S_M$ away from the double points is mapped by $\pi_1$ to 4-secant lines to $S_L$.
In other words,
$S_L$ admits a family of 4-secant lines parametrized by the smooth locus of $S_M$.

Conversely, every 4-secant line $l$ to $S_L$ satisfies
\[
	l\cdot M = l\cdot (4L-E_X) = 4-4=0.
\]
So $l$ is contracted to a point by $f$.
Hence the union of the 4-secant lines to $S_L$ forms a 3-fold contained in $\Theta$ and thus coincides with $\Theta$.
\end{proof}

\section{Derived equivalences of K3 surfaces}\label{sect:DE}
Let's keep the notation of Section \ref{subsect:someGeo}.
By Theorem \ref{thm:existence},
there exists two degree 12 K3 surfaces $R_L$ and $R_M$ projected onto the base loci $S_L$ and $S_M$, respectively.
This section is devoted to the following:

\begin{thm}\label{thm:DE}
The two K3 surfaces $R_L$ and $R_M$ are derived equivalent. They are non-isomorphic if they have Picard number one.
\end{thm}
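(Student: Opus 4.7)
The plan is to produce a Hodge isometry $T(R_L)\simeq T(R_M)$ from the common resolution $X$, invoke the Mukai--Orlov derived Torelli theorem for the derived equivalence, and then rule out isomorphism under the Picard-rank-one hypothesis via the standard count of Fourier--Mukai partners of a degree-$12$ K3.

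For the Hodge isometry, I would step through the factorization $P'\to P\to \lP^4$ together with the contraction $P'\to X$ of Section~\ref{subsect:neceCond}, applying the blow-up formula for integral cohomology at each stage. The normalization $\Sigma_L$ of $S_L$ is $R_L$ blown up at the three projection centers, so $T(\Sigma_L)=T(R_L)$. Blowing up $\lP^4$ at the three transverse double points contributes only algebraic classes to $H^4$; blowing up $P$ along the smooth surface $S'_L$ adds the summand $H^2(S'_L,\Z)(-1)$; and contracting the $\lP^1$-bundles $E_i'$ eliminates the algebraic classes supported on the exceptional quadrics $Q_i$. The net outcome is a splitting
\[
    H^4(X,\Z)\;=\;\Lambda_L\,\oplus\,H^2(\Sigma_L,\Z)(-1)
\]
in which $\Lambda_L$ is entirely algebraic. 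Restricting to transcendental summands yields $T(R_L)(-1)\simeq T(X)$, and the symmetric analysis through $\pi_2\colon X\to\lP^4$ gives $T(R_M)(-1)\simeq T(X)$. Composition produces the desired Hodge isometry $\varphi\colon T(R_L)\xrightarrow{\sim}T(R_M)$, and the Mukai--Orlov derived Torelli theorem then gives $D^b(R_L)\simeq D^b(R_M)$.

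For the non-isomorphism statement, assume $\rho(R_L)=\rho(R_M)=1$. A generic K3 surface of degree $12$ and Picard rank one admits exactly two isomorphism classes of Fourier--Mukai partners, since $6=2\cdot 3$ has two distinct prime factors (Hosono--Lian--Oguiso--Yau, Oguiso). It therefore suffices to show that $R_M$ realizes the \emph{non-trivial} partner of $R_L$, equivalently that $\varphi$ does not lift to a polarized Hodge isometry $H^2(R_L,\Z)\simeq H^2(R_M,\Z)$ sending $H_L$ to $\pm H_M$. Following Mukai's moduli-theoretic realization of this partner, I would identify $R_M$ with $M_{R_L}(v)$ for an explicit primitive Mukai vector $v=(r,H_L,s)\in\widetilde{H}(R_L,\Z)$ with $v^2=0$ and $rs\neq 1$, by computing the image of $H_M$ through the correspondence $X$ using the intersection numbers of Corollary~\ref{cor:intNum}. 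The main obstacle will be this last matching step: relating the Fourier--Mukai kernel implicit in the Cremona transformation to the universal sheaf on $R_L\times M_{R_L}(v)$, which requires tracking Chern characters through both the factorization $P'\to X$ and the contractions onto the exceptional quadrics $Q_i$, and verifying that the resulting kernel is not the structure sheaf of the diagonal of any isomorphism $R_L\simeq R_M$.
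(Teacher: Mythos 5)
Your first half---the orthogonal splitting of $H^4(X,\Z)$ obtained by running the blow-up formula through $P'\to P\to\lP^4$ and the contraction $P'\to X$, yielding $T(R_L)(-1)\simeq T(X)\simeq T(R_M)(-1)$ and then invoking Orlov's derived Torelli theorem---is exactly the paper's argument (Lemma~\ref{decH4X} and Proposition~\ref{deEquiv}), so that part is fine.

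The non-isomorphism half has a genuine gap. You reduce to showing that $R_M$ is the \emph{non-trivial} Fourier--Mukai partner of $R_L$, and you propose to do this by exhibiting $R_M$ as $M_{R_L}(v)$ and matching the Fourier--Mukai kernel implicit in the Cremona transformation against a universal sheaf---a step you yourself flag as the main obstacle and do not carry out. That matching is not needed, and the paper avoids it entirely. The closing move is purely lattice-theoretic: using the intersection numbers of Corollary~\ref{cor:intNum} one solves for $\widetilde{H}_M$ in the basis $\{L^2,\widetilde{H}_L,\widetilde{F}_i,Q_i\}$ of $H^4(X,\Z)_{alg}$ (Lemma~\ref{baseChg} pins down $\widetilde{H}_M=36L^2-17\widetilde{H}_L+24\sum\widetilde{F}_i+12\sum Q_i$ via a Cauchy--Schwarz argument forcing the coefficients), from which the induced map on the discriminant groups $\mathrm{d}A_M(X)\simeq\mathrm{d}A_L(X)\simeq\Z/12\Z$ is multiplication by $-17\equiv 7$ (Proposition~\ref{times7}). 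If $R_L\simeq R_M$, composing the resulting isometry of transcendental lattices with $\varphi$ would give a Hodge self-isometry of $T(R_L)$ acting by $7\neq\pm1$ on $\Z/12\Z$, contradicting Oguiso's rigidity of $T(R)$ for Picard rank one. You actually name the right computation (``the image of $H_M$ through the correspondence $X$ using the intersection numbers''), but without the discriminant-group comparison and the appeal to Oguiso you have no way to conclude; the kernel-identification route you substitute for it is both much harder and left incomplete, so as written the second assertion of the theorem is not proved.
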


\begin{cor} \label{cor:hilbthree}
There is a birational map $\sigma:R_L^{[3]}\dashrightarrow R_M^{[3]}$ between the Hilbert schemes of length three subschemes.
\end{cor}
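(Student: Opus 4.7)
The plan is to build $\sigma$ by running the Cremona construction of Theorem~\ref{thm:existence} in families as the triple of points varies over $R_L^{[3]}$. The conditions (\ref{exist1})--(\ref{exist3}) of the theorem are open, so there is a dense open $U_L\subset R_L^{[3]}$ such that for every $\Pi\in U_L$ the projection $R_L\subset\lP^7\dashrightarrow\lP^4$ from $\Pi$ yields a surface on which the vanishing quartics cut out a Cremona transformation $f_\Pi$. By part~(\ref{exist4}), $f_\Pi^{-1}$ has base locus equal to the projection of a degree $12$ K3 surface $R'(\Pi)$ from a triple $\Pi'(\Pi)\in R'(\Pi)^{[3]}$, namely the preimages on the normalization of the three transverse double points.

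Next I would show $R'(\Pi)\simeq R_M$ for $\Pi$ in a dense open of $U_L$. Theorem~\ref{thm:DE} furnishes a derived equivalence $R_L\sim R'(\Pi)$ for each $\Pi\in U_L$. Since $R_L$ is generic of degree $12$ we may assume it has Picard rank one, so its set of Fourier--Mukai partners up to isomorphism is finite. The surfaces $R'(\Pi)$ assemble into a family over $U_L$, and the resulting classifying morphism into the separated moduli space of polarized degree $12$ K3 surfaces factors through this finite set; being a morphism from the connected variety $U_L$ to a discrete target it is constant. At the distinguished triple $\Pi_0$ used in Section~\ref{sect:const} its value is $R_M$ by Theorem~\ref{thm:DE}, so $R'(\Pi)\simeq R_M$ throughout $U_L$. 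Fixing an identification (well-defined since a generic degree $12$ polarized K3 surface has trivial automorphism group), the assignment $\Pi\mapsto\Pi'(\Pi)$ defines a rational map $\sigma:R_L^{[3]}\dashrightarrow R_M^{[3]}$.

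Finally, birationality of $\sigma$ follows from the symmetry of the construction. Running the same procedure starting from a generic $\Pi''\in R_M^{[3]}$ produces a Cremona transformation whose inverse has base locus a projection of $R_L$ (by the same Picard-rank uniqueness argument) from a triple on it. Since the graph $X$ is simultaneously the graph of $f_\Pi$ and of $f_\Pi^{-1}$, we have $(f_\Pi^{-1})^{-1}=f_\Pi$, whose base locus is the original projection of $R_L$ from $\Pi$. Thus the two Cremona constructions are mutually inverse on a dense open, so are the induced rational maps, and $\sigma$ is birational between the smooth irreducible sixfolds $R_L^{[3]}$ and $R_M^{[3]}$.

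The main obstacle is the middle paragraph: ensuring that the K3 surface $R'(\Pi)$ produced by varying $\Pi$ is globally isomorphic to the fixed $R_M$, rather than assembling into a non-trivial family of derived equivalent K3 surfaces over $U_L$. This hinges on the finiteness of Fourier--Mukai partners for a Picard-rank-one degree $12$ K3 and on the connectedness of the parameter space $U_L$; the remaining steps are formal applications of Theorems~\ref{thm:existence} and~\ref{thm:DE}.
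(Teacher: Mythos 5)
Your proposal is correct and follows essentially the same route as the paper: construct $\Pi_M$ from $\Pi_L$ via the Cremona transformation, argue that the target K3 surface does not vary with $\Pi_L$, and deduce birationality from the symmetry $(f_\Pi^{-1})^{-1}=f_\Pi$. The only cosmetic difference is that the paper obtains constancy of $R_M$ directly from the fact that a degree $12$ K3 surface of Picard rank one has a \emph{unique} non-trivial Fourier--Mukai partner, rather than from your finiteness-plus-connectedness argument in the moduli space.
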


\subsection{Derived equivalences and general strategy} 
\label{subsect:DES}
Let $R$ and $\hat{R}$ denote K3 surfaces and $T(R)$ and $T(\hat{R})$ the corresponding transcendental lattices. Recall that $R$ and $\hat{R}$ are derived equivalent if and only if $T(R)$ and $T(\hat{R})$ are Hodge isometric \cite{Orl97}. Suppose that $R$ has Picard rank one and degree $2n$. Let $\tau(n)$ be
the number of prime factors of $n$. Then the number of isomorphism classes of K3 surfaces derived equivalent to $R$ is equal to $2^{\tau(n)-1}$ \cite{HLOY03}. Thus a general degree $12$ K3 surface admits a unique such partner.

Our general approach is to prove that $T(R_L)$ is isometric to $T(R_M)$ by showing that both of them can be identified as the transcendental sublattice of $H^4(X,\Z)$. Then we show that the induced isomorphism on the discriminant groups is nontrivial, which implies that $R_L$ and $R_M$ are not isomorphic to each other.

\subsection{The middle cohomology of $X$}
Retain the notation of Section~\ref{sect:setup}. Let $H_L$ be the polarization of $R_L$. Let $F_1$, $F_2$ and $F_3$ be the exceptional curves from the projection $R_L\dashrightarrow S_L$. We consider $H_L$, $F_1$, $F_2$ and $F_3$ as curves on $S_L$. Their strict transforms $\widetilde{H}_L$, $\widetilde{F}_1$, $\widetilde{F}_2$, $\widetilde{F}_3$ on $X$ together with $L^2$ and the quadrics $Q_1$, $Q_2$, $Q_3$ form a rank 8 sublattice $A_L(X)\subset H^4(X,\Z)$. We have
\[
	\widetilde{H}_L^2=-H_L^2=-12,\quad\widetilde{F}_i^2=-F_i^2=1\quad\mbox{and}\quad Q_i^2=-E_i'^4=1
\] 
where $i=1,2,3$. These classes are mutually disjoint, so the intersection matrix for $A_L(X)$ is
\begin{equation}\label{intAL}\begin{array}{c|cccc}
&L^2&\widetilde{H}_L&\widetilde{F}_{1,2,3}&Q_{1,2,3}\\
\hline
L^2&1&&&\\
\widetilde{H}_L&&-12&&\\
\widetilde{F}_{1,2,3}&&&I_{3\times3}&\\
Q_{1,2,3}&&&&I_{3\times3}\\
\end{array}\end{equation}
where $I_{3\times3}$ is the identity matrix of rank 3.

\begin{lemma}\label{decH4X}
There is a decomposition
\[
	H^4(X,\Z)\simeq H^4(X,\Z)_{alg}\oplus_\perp T(R_L)(-1).
\]
where $H^4(X,\Z)_{alg}$ is the sublattice spanned by algebraic classes. We have
\[
	H^4(X,\Z)_{alg}=A_L(X)
\]
when $R_L$ has Picard number one. Here we use $\Lambda(-1)$ to denote a lattice $\Lambda$ equipped with the negative of its original product.
\end{lemma}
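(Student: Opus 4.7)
The plan is to analyze $H^4(X,\Z)$ using the smooth resolution of $X$ from Section~\ref{sect:setup} and the standard blowup formula for integral Hodge structures. Recall that $X$ fits into a diagram $\lP^4\leftarrow P\leftarrow P'\rightarrow X$, where $P\to\lP^4$ is the blowup at the three double points of $S_L$, $P'\to P$ is the blowup along the smooth surface $S'$ (the proper transform of $S_L$, isomorphic to $R_L$ blown up at $3+6=9$ points), and by the Remark in Section~\ref{sect:setup} the arrow $P'\to X$ is the blowup along the three quadrics $Q_1,Q_2,Q_3\subset X$. Applying the blowup formula to each of the morphisms out of $P'$ produces the pair of isomorphisms of integral Hodge structures
\[
H^4(P,\Z)\oplus H^2(S',\Z)(-1)\simeq H^4(P',\Z)\simeq H^4(X,\Z)\oplus\bigoplus_{i=1}^{3}H^2(Q_i,\Z)(-1).
\]

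To identify the transcendental part, I would observe that $H^4(P,\Z)$ is of rank $4$ and purely algebraic (generated by $L^2$ and the three $E_i^2$), and each $H^2(Q_i,\Z)\simeq\Z^2$ is also purely algebraic. Since $S'\to R_L$ is an iterated blowup at nine points, $H^2(S',\Z)$ differs from $H^2(R_L,\Z)$ only by nine algebraic $(1,1)$-classes, so the transcendental sublattice is unchanged. Comparing the two decompositions above, the transcendental part of $H^4(X,\Z)$ is Hodge-isometric to the transcendental part of $H^2(R_L,\Z)(-1)$, namely $T(R_L)(-1)$, and sits inside $H^4(X,\Z)$ with orthogonal complement equal by definition to $H^4(X,\Z)_{alg}$, giving the claimed orthogonal decomposition.

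For the second assertion, suppose $R_L$ has Picard number one, so that the algebraic part of $H^2(R_L,\Z)$ is $\Z H_L$. Then the algebraic part of $H^2(S',\Z)$ has rank $10$, spanned by $H_L$, the three exceptional curves $F_1,F_2,F_3$ of $\Sigma\to R_L$, and the six exceptional curves $Q_i',Q_i''$ of $S'\to\Sigma$. Combining this with the rank-$4$ algebraic part of $H^4(P,\Z)$ and subtracting the rank-$6$ contribution from $\bigoplus H^2(Q_i,\Z)(-1)$ shows that $H^4(X,\Z)_{alg}$ has rank $8$. Tracking the generators through the two blowup formulas: $H_L$ and $F_i$ push to the strict transforms $\widetilde{H}_L$ and $\widetilde{F}_i$; the class $L^2$ descends from the pullback of the hyperplane square; and the three $E_i^2$ from $H^4(P,\Z)$ together with the six $Q_i',Q_i''$ reassemble under the blowdown $P'\to X$ into the three quadric classes $Q_i\in H^4(X,\Z)$. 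The intersection matrix~(\ref{intAL}) follows by direct calculation: $\widetilde{H}_L^2=-H_L^2=-12$ and $\widetilde{F}_i^2=-F_i^2=1$ from the sign flip in the blowup formula (with $F_i^2=-1$ on $\Sigma$), $Q_i^2=-E_i'^4=1$ from Lemma~\ref{intEEi}, and the off-diagonal entries vanish because the eight classes are supported on disjoint closed subsets of $X$. The main obstacle will be the integral bookkeeping: keeping careful track of how the transcendental and algebraic sublattices sit inside the unimodular $H^4(X,\Z)$, and verifying that $A_L(X)$ equals (not merely has finite index in) $H^4(X,\Z)_{alg}$.
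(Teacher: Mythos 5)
Your proposal follows essentially the same route as the paper: compare the two blowup-formula decompositions of $H^4(P',\Z)$ coming from $P'\to P\to\lP^4$ and from $P'\to X$, identify the generators you list, and match up transcendental and algebraic parts. The one step you defer as ``the main obstacle'' --- checking that the decomposition holds over $\Z$ and not merely up to finite index --- is exactly where the paper does its only non-formal work: it rewrites $\left<E_i'^2,\widetilde{Q}_i',\widetilde{Q}_i''\right>$ in the basis $\left<E_i'^2+\widetilde{Q}_i',\;E_i'^2+\widetilde{Q}_i'',\;E_i'^2+\widetilde{Q}_i'+\widetilde{Q}_i''\right>$, in which the rank-two sublattice contracted by $P'\to X$ splits off as a unimodular orthogonal summand, and then pins down $Q_i=\pm\left(E_i'^2+\widetilde{Q}_i'+\widetilde{Q}_i''\right)$ by orthogonality and self-intersection, so the complement is computed exactly rather than rationally.
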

\begin{proof}
We apply the blowup formula for cohomology to the composition $P'\rightarrow P\rightarrow\lP^4$ and the map $P'\rightarrow X$ to obtain two decompositions for $H^4(P',\Z)$. Then we compare them to get our result.

Let $S_L'\subset P$ be the strict transform of $S_L$. Recall that $S_L'$ is isomorphic to $R_L$ blown up at 3+6=9 points, where 3 are from the projection $R_L\dashrightarrow S_L$ while 6 are from the resolution $S_L'\rightarrow S_L$. Thus we have
\[
	H^2(S_L',\Z)\simeq\left<F_i,\;Q_i',\;Q_i''\right>_{i=1,2,3}\oplus H^2(R_L,\Z).
\]
Let $\widetilde{Q}_i'$ and $\widetilde{Q}_i''$ be the strict transforms of $Q_i'$ and $Q_i''$ on $P'$. Since $P'\rightarrow P$ is the blowup along $S_L'$, we have
\begin{equation}\label{decompP1}\begin{array}{ccl}
	H^4(P',\Z)&\simeq&H^4(P,\Z)\oplus H^2(S_L',\Z)(-1)\\
	&\simeq&\left<L^2,\;E_i'^2,\;\widetilde{F}_i,\;\widetilde{Q}_i',\;\widetilde{Q}_i''\right>_{i=1,2,3}\oplus H^2(R_L,\Z)(-1).
\end{array}\end{equation}

For every $i$, we have
\[
\widetilde{Q}_i'^2=-Q_i'^2=1,\quad\widetilde{Q}_i''^2=-Q_i''^2=1,\quad E_i'^2\widetilde{Q}_i'=E_i'^2\widetilde{Q}_i''=0
\]
and $E_i'^4=-1$. With these it's straightforward to prove the isometry
\[
	\left<E_i'^2,\;\widetilde{Q}_i',\;\widetilde{Q}_i''\right>\simeq
	\left<E_i'^2+\widetilde{Q}_i',\;E_i'^2+\widetilde{Q}_i'',\;E_i'^2+\widetilde{Q}_i'+\widetilde{Q}_i''\right>,
\]
whence (\ref{decompP1}) equals
\begin{equation}\label{decompP2}\begin{array}{cl}
	H^4(P',\Z)\simeq&\left<E_i'^2+\widetilde{Q}_i',\;E_i'^2+\widetilde{Q}_i'',\;E_i'^2+\widetilde{Q}_i'+\widetilde{Q}_i'',\;L^2,\;\widetilde{F}_i\right>_{i=1,2,3}\\
	&\oplus H^2(R_L,\Z)(-1).
\end{array}\end{equation}

By the description of the map $E_i\dashrightarrow Q_i$, the two fiber classes on $Q_i\simeq\lP^1\times\lP^1$ pullback to hyperplanes in $E_i$ containing either $Q_i'$ or $Q_i''$, which correspond to the classes $-E_i'^2-\widetilde{Q}_i'$ or $-E_i'^2-\widetilde{Q}_i''$ on $P'$, respectively. The map $P'\rightarrow X$ is the blowup along $Q_i$, $i=1,2,3$, so
\begin{equation}\label{decompX}
	H^4(P',\Z)\simeq\left<E_i'^2+\widetilde{Q}_i',\;E_i'^2+\widetilde{Q}_i''\right>_{i=1,2,3}\oplus H^4(X,\Z)
\end{equation}
Combining (\ref{decompP2}) and (\ref{decompX}) we get
\[\begin{array}{ccl}
	H^4(X,\Z)&\simeq&\left<E_i'^2+\widetilde{Q}_i',\;E_i'^2+\widetilde{Q}_i''\right>_{i=1,2,3}^\perp\\
	&\simeq&\left<E_i'^2+\widetilde{Q}_i'+\widetilde{Q}_i'',\;L^2,\;\widetilde{F}_i\right>_{i=1,2,3}\oplus H^2(R_L,\Z)(-1).
\end{array}\]

Both $Q_i$ and $E_i'^2+\widetilde{Q}_i'+\widetilde{Q}_i''$ are orthogonal to $L^2$, $\widetilde{F}_{i=1,2,3}$ and $H^2(R_L,\Z)$, and $Q_i^2=(E_i'^2+\widetilde{Q}_i'+\widetilde{Q}_i'')^2=1$, so $Q_i=\pm(E_i'^2+\widetilde{Q}_i'+\widetilde{Q}_i'')$. Therefore
\[\begin{array}{ccl}
	H^4(X,\Z)&\simeq&\left<Q_i,\;L^2,\;\widetilde{F}_i\right>_{i=1,2,3}\oplus H^2(R_L,\Z)(-1)\\
	&\simeq&\left<Q_i,\;L^2,\;\widetilde{F}_i\right>_{i=1,2,3}\oplus NS(R_L)(-1)\oplus_\perp T(R_L)(-1)\\
	&\simeq&H^4(X,\Z)_{alg}\oplus_\perp T(R_L)(-1)
\end{array}\]
where $NS(R_L)$ is the N\'{e}ron-Severi lattice of $R_L$.

When $R_L$ has Picard number one, we have $NS(R_L)(-1)\simeq\left<\widetilde{H}_L\right>$. In this case
\[
	H^4(X,\Z)_{alg}\simeq\left<Q_i,\;L^2,\;\widetilde{F}_i,\;\widetilde{H}_L\right>_{i=1,2,3}=A_L(X).
\]
\end{proof}

Lemma~\ref{decH4X} also proves the decomposition
\[
	H^4(X,\Z)\simeq H^4(X,\Z)_{alg}\oplus_\perp T(R_M)(-1)
\]
from the side of $f^{-1}$. So there is an isometry
\[
	T(R_L)\simeq H^4(X,\Z)_{alg}^\perp(-1)\simeq T(R_M)
\]
which allows us to conclude that
\begin{prop}\label{deEquiv}
$R_L$ and $R_M$ are derived equivalent.
\end{prop}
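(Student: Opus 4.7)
The plan is to invoke Lemma \ref{decH4X} twice, exploiting the symmetry of the construction. On the $\pi_1$-side the lemma provides an orthogonal decomposition
\[
    H^4(X,\Z) \simeq H^4(X,\Z)_{alg} \oplus_\perp T(R_L)(-1),
\]
and the identical argument, with $f$ replaced by $f^{-1}$, $\pi_1$ by $\pi_2$, $S_L$ by $S_M$, and $R_L$ by $R_M$, produces
\[
    H^4(X,\Z) \simeq H^4(X,\Z)_{alg} \oplus_\perp T(R_M)(-1).
\]
Since $H^4(X,\Z)_{alg}$ is intrinsic to $X$ (it is defined as the sublattice spanned by algebraic classes, independent of which resolution one uses), its orthogonal complement in $H^4(X,\Z)$ is canonically one piece of each decomposition. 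Reading off that complement from both sides yields an isometry $T(R_L)(-1) \simeq T(R_M)(-1)$, which after Tate-untwisting gives $T(R_L) \simeq T(R_M)$.

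The substantive point to check is that this identification is \emph{Hodge-theoretic}, not merely a lattice isomorphism. Each step in the proof of Lemma \ref{decH4X} is an instance of the classical blow-up formula, which is an isomorphism of integral Hodge structures; and $H^4(X,\Z)_{alg}$, being generated by algebraic cycles, is a sub-Hodge structure concentrated in bidegree $(2,2)$, so its orthogonal complement inherits a sub-Hodge structure. The identifications with $T(R_L)(-1)$ and $T(R_M)(-1)$ appearing in the lemma are therefore isomorphisms of weight-$4$ Hodge structures, so the resulting map $T(R_L) \simeq T(R_M)$ is a Hodge isometry of the weight-$2$ transcendental lattices.

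With the Hodge isometry in hand, the conclusion is a direct application of the Mukai--Orlov derived Torelli theorem for K3 surfaces \cite{Orl97}: two projective K3 surfaces are derived equivalent if and only if their transcendental lattices are Hodge-isometric. The main potential obstacle is the Hodge-theoretic nature of Lemma \ref{decH4X}'s decomposition, but, as noted above, this is automatic from the blow-up formula, so no additional verification is required beyond what is already implicit in that lemma.
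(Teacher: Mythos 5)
Your proposal is correct and follows the paper's argument exactly: both apply Lemma \ref{decH4X} from the $f$ and $f^{-1}$ sides, identify $T(R_L)(-1)$ and $T(R_M)(-1)$ with the orthogonal complement of $H^4(X,\Z)_{alg}$, and conclude via Orlov's criterion that Hodge-isometric transcendental lattices imply derived equivalence. Your additional remark that the blow-up formula respects Hodge structures is a worthwhile explicit check of a point the paper leaves implicit.
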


\subsection{The discriminant groups}
For an arbitrary lattice $\Lambda$ with dual lattice $\Lambda^*:={\rm Hom}(\Lambda,\Z)$, we denote by ${\rm d}\Lambda:=\Lambda^*/\Lambda$ its discriminant group. 

Let $A_M(X)$ be the lattice constructed in the same way as $A_L(X)$ from the side of $f^{-1}$. Assume $R_L$ and $R_M$ have Picard number one. Then Lemma~\ref{decH4X} implies that there is an isometry
\[
	\varphi:A_M(X)\oplus_\perp T(R_M)(-1)\xrightarrow{\sim}A_L(X)\oplus_\perp T(R_L)(-1)
\]
such that $\varphi=\varphi_A\oplus\varphi_T$ with respect to the decompositions. It induces the commutative diagram
\[
\xymatrix{
	\mathrm{d}A_M(X) \ar[r]^{\varphi_{A_*}}_{\sim} \ar[d]_{\sim}  &
 	\mathrm{d}A_L(X) \ar[d]_{\sim} \\
	\mathrm{d}T(R_M) \ar[r]^{\varphi_{T_*}}_{\sim} & \mathrm{d}T(R_L).
}
\]
These groups are all isomorphic to $\Z/12\Z$. From the intersection matrix (\ref{intAL}) we know that ${\rm d}A_L(X)$ is generated by $-\widetilde{H}_L/12$. Similarly, ${\rm d}A_M(X)$ is generated by $-\widetilde{H}_M/12$ where $H_M$ is the polarization of $R_M$ and $\widetilde{H}_M$ is the strict transform on $X$.

\begin{lemma}\label{baseChg}
We have the following equations in $H^4(X,\Z)$
	\begin{enumerate}
	\item $M^2=7L^2-3\widetilde{H}_L+4(\widetilde{F}_1+\widetilde{F}_2+\widetilde{F}_3)+2(Q_1+Q_2+Q_3)$
	\item $\widetilde{H}_M=36L^2-17\widetilde{H}_L+24(\widetilde{F}_1+\widetilde{F}_2+\widetilde{F}_3)+12(Q_1+Q_2+Q_3)$
	\end{enumerate}
\end{lemma}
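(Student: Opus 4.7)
\medskip
\noindent\textbf{Proof plan.} The plan is to exploit the diagonality of~(\ref{intAL}): every class $\alpha \in A_L(X)$ has the expansion
\[
\alpha = (\alpha \cdot L^2)\,L^2 - \tfrac{1}{12}(\alpha \cdot \widetilde{H}_L)\,\widetilde{H}_L + \sum_{i=1}^{3} (\alpha \cdot \widetilde{F}_i)\,\widetilde{F}_i + \sum_{i=1}^{3} (\alpha \cdot Q_i)\,Q_i,
\]
so both identities reduce to computing four intersection numbers.

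For part~(1), the coefficient of $L^2$ is $M^2 \cdot L^2 = 7$ by Corollary~\ref{cor:intNum}. The remaining pairings come from the identity $M = 4L - E - 2\sum E_i'$ on $P'$ from Section~\ref{subsect:neceCond} (with $(n,m) = (4,1)$), which descends to $M = 4L - E_X$ on $X$ under the contractions $E_i' \mapsto Q_i$; this may be sanity-checked against Corollary~\ref{cor:intNum} since $(4L - E_X)^k L^{4-k}$ recovers $M^k L^{4-k}$ for $k=1,2,3,4$. Expanding $M^2 = 16L^2 - 8LE_X + E_X^2$ and pairing against each basis element: because $\widetilde{H}_L$ and $\widetilde{F}_i$ are the restrictions of the $\lP^1$-bundle $E_X \to S_L$ over the curves $H_L$ and $F_i$ on $S_L'$, their pairings with $M^2$ reduce to intersection numbers on $S_L'$ computable from the degrees of the projected curves ($12$ and $1$ respectively) together with the normal bundle of $S_L'$ in $P$; for $Q_i$, the pairing is read off from the $\lP^1 \times \lP^1$ structure described in step~(\ref{step3}) of Section~\ref{subsect:neceCond}, using $L|_{Q_i} = 0$. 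Collecting the four numbers $(7, 36, 4, 2)$ and dividing by the diagonal entries of~(\ref{intAL}) yields the claimed coefficients $(7, -3, 4, 2)$.

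Part~(2) can be proved the same way, by computing the four pairings $\widetilde{H}_M \cdot L^2$, $\widetilde{H}_M \cdot \widetilde{H}_L$, $\widetilde{H}_M \cdot \widetilde{F}_i$, $\widetilde{H}_M \cdot Q_i$. Alternatively, the $f \leftrightarrow f^{-1}$ symmetry yields the analog of part~(1) on the $M$-side, namely $L^2 = 7M^2 - 3\widetilde{H}_M + \cdots$ in the corresponding $A_M$-basis of $H^4(X,\Z)_{alg}$; combining this with part~(1) and the change-of-basis matrix between the two orthogonal bases of $H^4(X,\Z)_{alg}$ solves for $\widetilde{H}_M$ in the $L$-side basis.

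The main obstacle I foresee is the pairings involving $\widetilde{H}_L$ or $\widetilde{H}_M$, since these classes encode the abstract K3 geometry of $R_L$ or $R_M$ rather than properties of $f$ directly. Carrying them out requires careful bookkeeping of how each surface embeds into its ambient exceptional divisor and how $E_X$ (or its $M$-side counterpart) restricts along it; however, the explicit numerical invariants $d = 9$, $K_\Sigma C = 3$, $c_2(\Sigma) = 27$, $\delta = 3$ recorded in Section~\ref{subsect:someGeo}, together with Lemmas~\ref{intEEi} and~\ref{intLE} and Corollary~\ref{cor:intNum}, are sufficient to reduce every such pairing to routine arithmetic.
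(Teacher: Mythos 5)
Your orthogonal-projection setup is sound (granting, as the paper implicitly does for this lemma, that $M^2,\widetilde{H}_M\in A_L(X)$, i.e.\ the Picard number one case), and for part (1) your direct route genuinely works and differs from the paper's. Indeed $M^2\cdot\widetilde{H}_L=16L^2\widetilde{H}_L-8LE_X\widetilde{H}_L+E_X^2\widetilde{H}_L=0+96-60=36$, using $\deg_{\lP^4}H_L=12$ and $c_1(N_{S_L'/P})\cdot H_L=(5C+\tau^*K_\Sigma-2\Sigma_i(Q_i'+Q_i''))\cdot H_L=60$, and similarly $M^2\cdot\widetilde{F}_i=8-4=4$; dividing by the diagonal entries of (\ref{intAL}) recovers $(7,-3,4,2)$. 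The paper instead computes only $a=L^2M^2=7$ and $g_i=M^2Q_i=2$ directly, and pins down $b$ and the $f_i$ indirectly: $LM^3=4$ and $M^4=1$ give $f_1+f_2+f_3=-12b-24$ and $f_1^2+f_2^2+f_3^2=12b^2-60$, and Cauchy--Schwarz forces $b=-3$ and $f_i\equiv4$. Your version trades that inequality argument for a computation needing the restriction of $N_{S_L'/P}$ to the individual curve classes; for part (1) either works.

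The gap is in part (2), and it is exactly the obstacle you flag and then wave away. The pairings $\widetilde{H}_M\cdot\widetilde{H}_L$, $\widetilde{H}_M\cdot\widetilde{F}_i$, $\widetilde{H}_M\cdot Q_i$ are not routine arithmetic from $d$, $K_\Sigma C$, $c_2(\Sigma)$, $\delta$ and Lemmas~\ref{intEEi} and \ref{intLE}: those data describe each blow-down structure of $X$ separately, whereas these three numbers measure how the ruled surface over $H_M$ inside $F_X$ meets the ruled surfaces and quadrics inside $E_X$ --- precisely the cross-term information the lemma asserts. (Concretely, $\widetilde{H}_M$ is a one-parameter family of $4$-secant lines to $S_L$, and you would need its incidence with each $Q_i$; nothing cited gives this.) Your symmetry fallback is circular: from $L^2=7M^2-3\widetilde{H}_M+4\Sigma_i\widetilde{G}_i+2\Sigma_iK_i$ and part (1) you obtain only the single combination $-3\widetilde{H}_M+4\Sigma_i\widetilde{G}_i+2\Sigma_iK_i$ in the $L$-basis, and isolating $\widetilde{H}_M$ requires the expressions for $\widetilde{G}_i$ and $K_i$, i.e.\ the very change-of-basis matrix you are computing. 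The paper's proof of part (2) circumvents all of this by using only the symmetric numbers $\widetilde{H}_ML^nM^{2-n}=\widetilde{H}_LM^nL^{2-n}$ (computable from part (1)) together with $\widetilde{H}_M^2=-12$, and then two Cauchy--Schwarz inequalities whose sum yields $(2b+33)(b+17)\le0$, forcing $b=-17$ and, via the equality case, $f_i\equiv24$, $g_i\equiv12$. You need either that argument or a genuinely new geometric computation of the cross pairings; as written, part (2) is not proved.
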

\begin{proof}
The following computation is based on Corollary \ref{cor:intNum}.

Assume that
\[
	M^2=aL^2+b\widetilde{H}_L+f_1\widetilde{F}_1+f_2\widetilde{F}_2+f_3\widetilde{F}_3+g_1Q_1+g_2Q_2+g_3Q_3.
\]
Then $a=L^2M^2=7$. For $i=1,2,3$, we have
\[\begin{array}{l}
	g_i = M^2Q_i = -M^2E_i'^2\\
	= -(4L-E-2\Sigma_jE_j')^2E_i'^2 = -(-E-2\Sigma_jE_j')^2E_i'^2\\
	= -E^2E_i'^2-4EE_i'^3-4E_i'^4 = -2-0+4 = 2.
\end{array}\]
Let $\widetilde{C}$ be the strict transform of the sectional curve $C$ on $P'$. Note that
\[
	LM = 4L^2-LE=4L^2-\widetilde{C}=4L^2-\widetilde{H}_L+\Sigma_i\widetilde{F}_i,
\]
so we find
\[\begin{array}{l}
	4 = LM^3 = (LM)M^2\\
	= (4L^2-\widetilde{H}_L+\Sigma_i\widetilde{F}_i)(7L^2+b\widetilde{H}_L+\Sigma_if_j\widetilde{F}_j+2\Sigma_kQ_k)\\
	= 28+12b+f_1+f_2+f_3
\end{array}\]
and thus
\begin{equation}\label{Mf}
	f_1+f_2+f_3 = -12b-24.
\end{equation}
We also have
\[\begin{array}{l}
	1 = M^4 = (7L^2+b\widetilde{H}_L+\Sigma_if_j\widetilde{F}_j+2\Sigma_kQ_k)^2\\
	= 49-12b^2+f_1^2+f_2^2+f_3^2+12
\end{array}\]
which is equivalent to
\begin{equation}\label{Mf2}
	f_1^2+f_2^2+f_3^2 = 12b^2-60.
\end{equation}
By the Cauchy-Schwarz inequality
\begin{equation}\label{MCSineq}\begin{array}{l}
	(f_1+f_2+f_3)^2=((1,1,1)\cdot(f_1,f_2,f_3))^2\\
	\leq(1,1,1)^2(f_1,f_2,f_3)^2=3(f_1^2+f_2^2+f_3^2).
\end{array}\end{equation}
Applying (\ref{Mf}) and (\ref{Mf2}) we get $(-12b-24)^2\leq3(12b^2-60)$, i.e.
\[
	3b^2+16b+21=(3b+7)(b+3)\leq0.
\]
The only integer solution is $b=-3$. Because (\ref{MCSineq}) becomes an equality in this case, we have $(f_1,f_2,f_3)=f(1,1,1)$ for some integer $f$. We obtain $f=4$ by setting $b=-3$ in (\ref{Mf}). As a result, we find
\[
	M^2=7L^2-3\widetilde{H}_L+4(\widetilde{F}_1+\widetilde{F}_2+\widetilde{F}_3)+2(Q_1+Q_2+Q_3).
\]

Next, assume that
\[
	\widetilde{H}_M=aL^2+b\widetilde{H}_L+f_1\widetilde{F}_1+f_2\widetilde{F}_2+f_3\widetilde{F}_3+g_1Q_1+g_2Q_2+g_3Q_3.
\]
By symmetry, $\widetilde{H}_LL^nM^{2-n}=\widetilde{H}_MM^nL^{2-n}$ for $n=0,1,2$. In particular,
\[
	a=\widetilde{H}_ML^2=\widetilde{H}_LM^2=-3\widetilde{H}_L^2=36.
\]
We have
\[\begin{array}{l}
	12=\widetilde{H}_L(4L^2-\widetilde{H}_L+\Sigma_i\widetilde{F}_i)=\widetilde{H}_L(LM)=\widetilde{H}_M(ML)\\
	=(36L^2+b\widetilde{H}_L+\Sigma_if_i\widetilde{F}_i+\Sigma_jg_jQ_j)(4L^2-\widetilde{H}_L+\Sigma_i\widetilde{F}_i)\\
	=144+12b+f_1+f_2+f_3.
\end{array}\]
Rearrange to obtain
\begin{equation}\label{Hf}
	f_1+f_2+f_3 = -12b-132.
\end{equation}
Applying the symmetry again, we get
\[\begin{array}{l}
	0=\widetilde{H}_LL^2=\widetilde{H}_MM^2\\
	=(36L^2+b\widetilde{H}_L+\Sigma_if_i\widetilde{F}_i+\Sigma_jg_jQ_j)(7L^2-3\widetilde{H}_L+4\Sigma_i\widetilde{F}_i+2\Sigma_jQ_j)\\
	=252+36b+4(f_1+f_2+f_3)+2(g_1+g_2+g_3)
\end{array}\]
whence
\[
	2(f_1+f_2+f_3)+(g_1+g_2+g_3)=-18b-126
\]
and combining with (\ref{Hf}) gives
\begin{equation}\label{Hg}
	g_1+g_2+g_3 = 6b+138.
\end{equation}
We also have
\[\begin{array}{l}
	-12=\widetilde{H}_L^2=\widetilde{H}_M^2=(36L^2+b\widetilde{H}_L+\Sigma_if_i\widetilde{F}_i+\Sigma_jg_jQ_j)^2\\
	=1296-12b^2+f_1^2+f_2^2+f_3^2+g_1^2+g_2^2+g_3^2,
\end{array}\]
from which we obtain
\begin{equation}\label{Hf2g2}
	f_1^2+f_2^2+f_3^2+g_1^2+g_2^2+g_3^2=12b^2-1308.
\end{equation}
By the Cauchy-Schwarz inequality,
\begin{equation}\label{Hfineq}\begin{array}{l}
	(-12b-132)^2=(f_1+f_2+f_3)^2=((1,1,1)\cdot(f_1,f_2,f_3))^2\\
	\leq(1,1,1)^2(f_1,f_2,f_3)^2=3(f_1^2+f_2^2+f_3^2)
\end{array}\end{equation}
and
\begin{equation}\label{Hgineq}\begin{array}{l}
	(6b+138)^2=(g_1+g_2+g_3)^2=((1,1,1)\cdot(g_1,g_2,g_3))^2\\
	\leq(1,1,1)^2(g_1,g_2,g_3)^2=3(g_1^2+g_2^2+g_3^2).
\end{array}\end{equation}
Add the two inequalities and then apply (\ref{Hf2g2}) to get
\begin{equation}\label{Hf2g2ineq}\begin{array}{l}
	(-12b-132)^2+(6b+138)^2\\
	\leq3(f_1^2+f_2^2+f_3^2+g_1^2+g_2^2+g_3^2)=3(12b^2-1308)
\end{array}\end{equation}
which can be arranged as
\[
	2b^2+67b+561=(2b+33)(b+17)\leq0.
\]
The only integer solution is $b=-17$ which makes (\ref{Hf2g2ineq}) an equality. This forces (\ref{Hfineq}) and (\ref{Hgineq}) to be equalities also. Therefore $(f_1,f_2,f_3)=f(1,1,1)$ and $(g_1,g_2,g_3)=g(1,1,1)$ for some integers $f$ and $g$. We get $f=24$ from (\ref{Hf}) and $g=12$ from (\ref{Hg}). As a consequence,
\[
	\widetilde{H}_M=36L^2-17\widetilde{H}_L+24(\widetilde{F}_1+\widetilde{F}_2+\widetilde{F}_3)+12(Q_1+Q_2+Q_3).
\]
\end{proof}

\begin{prop}\label{times7}
The isomorphism ${\varphi_A}_*:{\rm d}A_M(X)\xrightarrow{\sim}{\rm d}A_L(X)$ equals multiplication by 7 on $\Z/12\Z$.
\end{prop}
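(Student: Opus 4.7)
My plan is to observe first that, under the standing Picard-number-one hypothesis on both $R_L$ and $R_M$, the two algebraic sublattices coincide. Lemma~\ref{decH4X} applied from the $L$-side gives $H^4(X,\Z)_{alg} = A_L(X)$, and the symmetric statement from the $M$-side gives $H^4(X,\Z)_{alg} = A_M(X)$. Hence $A_L(X) = A_M(X)$ as sublattices of $H^4(X,\Z)$, the isometry $\varphi_A$ is simply the identity map on this common lattice, and $\varphi_{A_*}$ is literally the identity on the common discriminant group $A_L(X)^*/A_L(X)$. The content of the proposition then reduces to pure bookkeeping: determine the unit $u \in (\Z/12\Z)^{\times}$ such that the generator $-\widetilde{H}_M/12$ of $\mathrm{d}A_M(X)$ equals $u\cdot(-\widetilde{H}_L/12)$ modulo $A_L(X)$.

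The second step is to read off $u$ from Lemma~\ref{baseChg}(2). Dividing
\[
\widetilde{H}_M = 36L^2 - 17\widetilde{H}_L + 24\sum_{i=1}^{3}\widetilde{F}_i + 12\sum_{i=1}^{3}Q_i
\]
by $-12$ gives
\[
-\widetilde{H}_M/12 = -3L^2 + \tfrac{17}{12}\widetilde{H}_L - 2\sum_{i}\widetilde{F}_i - \sum_{i}Q_i.
\]
Every term with integer coefficients lies in $A_L(X)$ and therefore vanishes in the quotient, leaving
\[
-\widetilde{H}_M/12 \;\equiv\; \tfrac{17}{12}\widetilde{H}_L \;=\; -17\cdot(-\widetilde{H}_L/12) \;\equiv\; 7\cdot(-\widetilde{H}_L/12) \pmod{A_L(X)},
\]
since $-17 \equiv 7 \pmod{12}$. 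This is exactly the claimed formula.

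I do not foresee any real obstacle; the only delicate point is the identification $A_L(X) = A_M(X)$, which is what forces the use of the Picard-number-one hypothesis and which I would explicitly spell out before invoking the change-of-basis formula. As a consistency check, note that $7^2 = 49 \equiv 1 \pmod{12}$, so $\varphi_{A_*}$ is an involution on $\Z/12\Z$, as must be the case given the $L \leftrightarrow M$ symmetry of the construction.
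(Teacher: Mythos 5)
Your proposal is correct and follows essentially the same route as the paper: both identify $\varphi_A$ as the identity on the common algebraic lattice (so $\varphi_{A_*}$ is computed by rewriting $-\widetilde{H}_M/12$ in the $L$-basis via Lemma~\ref{baseChg}(2)) and both reduce $-17 \equiv 7 \pmod{12}$. Your explicit remark that $A_L(X)=A_M(X)$ and the involution check $7^2\equiv 1$ are harmless additions, not a different argument.
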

\begin{proof}
Recall that $\varphi_A$ acts as the identity map on $H^4(X,\Z)_{alg}$, thus $\varphi_A(\widetilde{H}_M)=\widetilde{H}_M$. By Lemma \ref{baseChg} we have
\[
\varphi_A(\widetilde{H}_M)=36L^2-17\widetilde{H}_L+24(\widetilde{F}_1+\widetilde{F}_2+\widetilde{F}_3)+12(Q_1+Q_2+Q_3).
\]
as a map from $A_M(X)$ to $A_L(X)$. Therefore
\[\begin{array}{ccl}
{\varphi_A}_*(-\frac{1}{12}\widetilde{H}_M)&=&-3L^2+\frac{17}{12}\widetilde{H}_L-2(\widetilde{F}_1+\widetilde{F}_2+\widetilde{F}_3)-(Q_1+Q_2+Q_3)\\
&=&-17\cdot(-\frac{1}{12}\widetilde{H}_L)\quad\mbox{mod}\;A_L(X)\\
&=&7\cdot(-\frac{1}{12}\widetilde{H}_L)\quad\mbox{mod}\;A_L(X).
\end{array}\]
\end{proof}

\noindent{\bf Remark.}
By the symmetry the Cremona transformation $f$,
the rank-8 lattice $H^4(X,\Z)_{alg}$ is also spanned by the classes
\[
	\{M^2, \widetilde{H}_M, \widetilde{G}_1, \widetilde{G}_2, \widetilde{G}_3, K_1, K_2, K_3\}
\]
constructed in a similar way from the right-hand side.
Here $\widetilde{G}_1, \widetilde{G}_2, \widetilde{G}_3$ are from the exceptional curves and $K_1, K_2, K_3$ are from the transverse double points.
The full transformation between the two set of bases is
\[
	\left(
	\begin{array}{c}
		M^2\\
		\widetilde{H}_M\\
		\widetilde{G}_1\\
		\widetilde{G}_2\\
		\widetilde{G}_3\\
		K_1\\
		K_2\\
		K_3
	\end{array}
	\right)
	=
	\left(
	\def\arraystretch{1.06}
	\begin{array}{cccccccc}
		7&-3&4&4&4&2&2&2\\
		36&-17&24&24&24&12&12&12\\
		4&-2&3&3&3&2&1&1\\
		4&-2&3&3&3&1&2&1\\
		4&-2&3&3&3&1&1&2\\
		2&-1&2&1&1&1&1&1\\
		2&-1&1&2&1&1&1&1\\
		2&-1&1&1&2&1&1&1
	\end{array}
	\right)\cdot
	\left(
	\begin{array}{c}
		L^2\\
		\widetilde{H}_L\\
		\widetilde{F}_1\\
		\widetilde{F}_2\\
		\widetilde{F}_3\\
		Q_1\\
		Q_2\\
		Q_3
	\end{array}
	\right)
\]
This expression is unique up to the ordering of the exceptional curves and the transverse double points on each side.
The top two rows are computed by Lemma \ref{baseChg}.
The other rows can be computed in a similar way.

\subsection{Proofs of Theorem~\ref{thm:DE} and its Corollary}
We first prove the theorem.

The derived equivalence follows from Proposition~\ref{deEquiv}. Note that this implies that the Picard numbers of $R_L$ and $R_M$ are the same.

Assume $R_L$ and $R_M$ have Picard number one. Suppose they are isomorphic. Then there is an isometry
\[
	\theta:T(R_L)\;\tilde{\rightarrow}\;T(R_M)
\]
which induces the isomorphism
\[\begin{array}{cccc}
\theta_*:&{\rm d}T(R_L)&\tilde{\rightarrow}&{\rm d}T(R_M)\\
&-\frac{\widetilde{H}_L}{12}&\mapsto&-\frac{\widetilde{H}_M}{12}
\end{array}\]
under the identifications ${\rm d}T(R_L)\simeq{\rm d}A_L(X)$ and ${\rm d}T(R_M)\simeq{\rm d}A_M(X)$.

By Proposition~\ref{times7}, the composition $\varphi_A\circ\theta$ is an automorphism on $T(R_L)$ acting as multiplication by 7 on ${\rm d}T(R_L)$. This contradicts the fact that the only automorphism on $T(R_L)$ is the identity 
\cite{Ogu02}. Hence $R_L$ and $R_M$ can't be isomorphic to each other.

Next we prove the corollary.

The corollary is trivial if $R_L$ and $R_M$ are isomorphic, so we assume that they are non-isomorphic.

Given a generic triple of points $\Pi_L\in R_L^{[3]}$, we determine a degree 12 K3 surface $R_M$ and a triple of points $\Pi_M\in R_M^{[3]}$ through the following steps:
\begin{enumerate}
\item Project $R_L$ from $\Pi_L$ to obtain $S_L\subset\lP^4$, whose ideal defines a Cremona transformation $f:\lP^4\dashrightarrow\lP^4$.
\item The base locus of $f^{-1}$ is a surface $S_M$ singular along three transverse double points. Normalize $S_M$ to get $\Sigma_M$.
\item $\Sigma_M$ is the blowup of a degree 12 K3 surface $R_M$ along three points. The three exceptional curves on $\Sigma_M$ are contracted to $\Pi_M\in R_M^{[3]}$.
\end{enumerate}

Recall that a pair of derived equivalent K3 surfaces of degree 12 uniquely determines each other up to isomorphism. So $R_M$ is independent of the choice of $\Pi_L\in R_L^{[3]}$ by Theorem~\ref{thm:DE}. Hence there is a rational map
\[\begin{array}{cccc}
\sigma:&R_L^{[3]}&\dashrightarrow&R_M^{[3]}\\
&\Pi_L&\mapsto&\Pi_M.
\end{array}\]
It is birational because $\Pi_L$ is uniquely determined by $\Pi_M$ through the same process as above.

\subsection{Connections between our construction and other approaches}
The derived equivalence and geometric connections
between the degree 12 K3 surfaces
$(R_L,H_L)$ and $(R_M,H_M)$ admit several interpretations.

\subsubsection{Mukai lattices}
For a K3 surface $R$, the {\em Mukai lattice} 
$$\widetilde{H}(R,\Z):= H^0(R,\Z) \oplus H^2(R,\Z) \oplus H^4(R,\Z),$$
equipped with a weight-two Hodge structure, i.e., the standard Hodge
structure on the middle summand and the outer summands taken
as $(1,1)$ classes. This is polarized by
$$(r_1,D_1,s_1)\cdot (r_2,D_2,s_2)= D_1 \cdot D_2 - r_1\cdot s_2 - r_2\cdot s_1.$$
Each coherent sheaf $E$ yields a Mukai vector
$$v(E)=(r(E),c_1(E),s(E)),$$
where $r(E)$ is the rank and $r(E)+s(E)=\chi(E)$.
Mukai \cite{Muk87} has shown that the second cohomology of a moduli space
$M_v(R)$ may be expressed
$$H^2(M_v(R),\Z)= 
\begin{cases} v^{\perp} & \text{ if } v\cdot v \ge 2 \\
	      v^{\perp}/\Z v & \text{ if } v\cdot v =0
\end{cases}
$$
provided $v$ is primitive and satisfies certain technical conditions.	
A derived equivalence between $R$ and $\hat{R}$ induces an isomorphism 
of Hodge structures
$$\Phi: 
\widetilde{H}(R,\Z) \stackrel{\sim}{\rightarrow}
\widetilde{H}(\hat{R},\Z)$$
which may be chosen so that $\hat{R}=M_v(R)$ with $\Phi(v)=(0,0,1)$.

We return to our degree 12 K3 surfaces $R_M$ and $R_L$.
We may interpret $R_M$ as a moduli space of vector bundles
on $R_L$ and {\em vice versa} \cite{Muk99}. Let $M_{(2,H_L,3)}(R_L)$
denote the moduli space of rank-two stable bundles $E$ with $c_1(E)=H_L$
and $\chi(R_L,E)=5$, which is isomorphic to $R_M$. The universal 
bundle $\mathcal{E} \rightarrow R_L \times R_M$ induces a Hodge isometry 
$$\Phi:
\widetilde{H}(R_L,\Z) \stackrel{\sim}{\rightarrow} \widetilde{H}(R_M,\Z)$$
described above. We have
$$
\Phi(2,H_L,3)=(0,0,1), \quad \Phi(0,0,1)=(2,H_M,3)$$
and $\Phi$ restricts to the isogeny on transcendental cohomology
mentioned in \S \ref{subsect:DES}. It follows formally
that
$$
\Phi(1,0,-2)=(-1,0,2),$$
thus after a shift the Mukai vector of ideal sheaves of length-three
subschemes of $R_L$ goes to the Mukai vector of length-three
subschemes of $R_M$. We obtain an isomorphism
$$H^2(R^{[3]}_L,\Z) \simeq H^2(R^{[3]}_M,\Z)$$
of Hodge structure arising from Mukai lattices.
Thus the Torelli Theorem \cite[Cor.~9.9]{Mar11} yields
a birational equivalence 
$$R_L^{[3]} \stackrel{\sim}{\dashrightarrow} R_M^{[3]}.$$
Corollary~\ref{cor:hilbthree} is quite natural from this perspective.

\medskip
\noindent {\bf Remark:} We also have $\Phi(1,0,-1)=(1,H_M,5)$. Elements
of $M_{(1,H_M,5)}(R_M)$ may be interpreted as $I_Z(H_M)$ where
$Z \subset R_M$ has length two. Similar reasoning gives 
$$R_L^{[2]} \stackrel{\sim}{\dashrightarrow} R_M^{[2]}.$$

\subsubsection{Homological projective duality}
Mukai \cite[Ex.~1.3]{Muk99} proposed an interpretation of the derived
equivalence via linear algebra, which was explored in detail by 
Iliev and Markushevich \cite{IM04}. The key observation is that the
components $\mathcal{S}^+$ and $\mathcal{S}^-$ described in 
\S \ref{subsect:OG} are naturally embedded in dual projective spaces
$$\mathcal{S}^+ \subset \lP^{15}, \quad
\mathcal{S}^- \subset \check{\lP}^{15}.$$
Codimension $r$ subspaces $\lP \subset \lP^{15}$
correspond to codimension $(16-r)$ subspaces
$\lP^{\perp}  \subset \check{\lP}^{15}$. When $r=8$,
the K3 surfaces $R=\mathcal{S}^+ \cap \lP$ and
$\check{R}=\mathcal{S}^- \cap \lP^{\perp}$ are dual.
These are derived equivalent
and generally non-isomorphic.
Kuznetsov \cite[\S 6.2]{Kuz06} has interpreted this derived equivalence via Homological Projective Duality.

It would be interesting to construct the Cremona transformation through this mechanism, by introducing the data of the three points on the K3 surface
into the duality construction.

\section{Zero divisors in the Grothendieck ring}\label{sect:Groth}
Let $K_0(Var/\C)$ denote the Grothendieck ring  of complex algebraic varieties. It is the abelian group generated by isomorphism classes of complex algebraic varieties subject to the relation
\[
	[Z]=[U]+[Z-U]
\]
where $U$ is an open subvariety of $Z$. The multiplication is induced by the Cartesian product:
\[
	[X][Y]=[X\times Y]
\]
which is associative and commutative with unit $1=[{\rm Spec}\,\C]$. More generally, if $Z\rightarrow X$ is a Zariski locally trivial bundle with fibers isomorphic to $Y$, by stratifying the base it's easy to prove that
\[
	[X][Y] = [Z].
\]

Let $\varmathbb{L}=[\varmathbb{A}^1]$ be the class of the affine line in $K_0(Var/\C)$. Consider a pair of non-isomorphic smooth projective varieties $X$ and $Y$ which are derived equivalent. It is interesting to know if there exists $k\geq0$ satisfying
\begin{equation}\label{L-equiv}
	([X]-[Y])\varmathbb{L}^k=0
\end{equation}
and what the minimal $k$ is if it exists \cite{KS16}.

When $X$ is a generic K3 surface of degree 12, Ito, Miura, Okawa and Ueda \cite{IMOU16K3} proves that there exists $Y$ non-trivially derived equivalent to $X$ such that (\ref{L-equiv}) holds for $k=3$. Actually, it can be improved to $k=1$ straightforwardly from the point of view of the Cremona transformation.

\begin{thm}
Let $R_L$ and $R_M$ be a generic pair of K3 surfaces associated with our Cremona transformation. Then we have
\[
	([R_L]-[R_M])\varmathbb{L}=0.
\]
in $K_0(Var/\C)$. The relation is minimal in the sense that $[R_L]-[R_M]\neq0$.
\end{thm}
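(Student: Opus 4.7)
The strategy is to compute $[X] \in K_0(Var/\C)$ in two different ways using the twin blowup structures $\pi_1, \pi_2 : X \to \lP^4$ from Section~\ref{subsect:someGeo}, and then translate the resulting identity from the base loci $S_L, S_M$ back to the K3 surfaces $R_L, R_M$.

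First, apply the scissor relations to $\pi_1: X \to \lP^4$: the map is an isomorphism over $\lP^4 \setminus S_L$; the fiber over a smooth point of $S_L$ is $\lP^1$; and the fiber over each of the three transverse double points $a_i$ is a smooth quadric $Q_i \simeq \lP^1 \times \lP^1$, as produced by the three-step factorization in Section~\ref{subsect:neceCond}. This yields
\[
	[X] = [\lP^4] - [S_L] + ([S_L]-3)(1+\varmathbb{L}) + 3(1+\varmathbb{L})^2 = [\lP^4] + [S_L]\varmathbb{L} + 3\varmathbb{L} + 3\varmathbb{L}^2.
\]
The same argument applied to $\pi_2$ gives the identical expression with $S_M$ in place of $S_L$, so subtracting produces $([S_L]-[S_M])\varmathbb{L}=0$.

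Next, translate from $S_L, S_M$ back to $R_L, R_M$. The normalization $\Sigma_L \to S_L$ is bijective away from the three double points and two-to-one above them, so $[\Sigma_L] = [S_L] + 3$. Since $\Sigma_L$ is isomorphic to $R_L$ blown up at the three projection centers $\Pi_L$, we also have $[\Sigma_L] = [R_L] + 3\varmathbb{L}$. Comparing these gives $[S_L] = [R_L] + 3(\varmathbb{L}-1)$; the analogous identity for $S_M, R_M$ then yields $[S_L]-[S_M] = [R_L]-[R_M]$. Multiplying by $\varmathbb{L}$ produces the asserted relation.

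For minimality, suppose $[R_L]=[R_M]$ in $K_0(Var/\C)$. By the theorem of Larsen--Lunts, the image of $[R_L]-[R_M]$ in $K_0(Var/\C)/(\varmathbb{L})$ maps to zero in the free abelian group on stable birational equivalence classes, so $R_L$ and $R_M$ would be stably birational. But K3 surfaces are not uniruled, so stable birationality implies birationality (via MRC quotients), and birational K3 surfaces are isomorphic since each K3 is its unique minimal model. This contradicts Theorem~\ref{thm:DE}. The only delicate point is identifying the fiber of $\pi_1$ and $\pi_2$ over each transverse double point as a smooth quadric, which follows directly from the factorization of Section~\ref{subsect:neceCond}; the rest is a direct Grothendieck-ring computation together with a standard application of Larsen--Lunts.
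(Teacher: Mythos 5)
Your proposal is correct and follows essentially the same route as the paper: compute $[X]$ via the scissor relations for both blowups $\pi_1,\pi_2$ (with $\lP^1$-fibers over smooth points and quadrics $Q_i\simeq\lP^1\times\lP^1$ over the double points), pass from $[S_L],[S_M]$ to $[R_L],[R_M]$ via the normalization and the blowup of the K3 at three points, and invoke Larsen--Lunts plus non-(uni)ruledness of K3 surfaces for minimality. The only cosmetic difference is the order in which you subtract and substitute; the content is identical.
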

\begin{proof}
Recall that $\Sigma_L$ is the normalization of $S_L$ as well as the blowup of $R_L$ at three points. Hence we have
\[
	[S_L] = [\Sigma_L]-3 = [R_L]+3\varmathbb{L}-3.
\]
From the blowup $\pi_1:X\rightarrow\lP^4$ we obtain
\begin{equation}\label{grothL}
\begin{array}{ccl}
	[X] &=& ([\lP^4]-[S_L])+[\pi_1^{-1}(S_L)]\\
	&=& ([\lP^4]-[S_L])+(([S_L]-3)[\lP^1]+[Q_1]+[Q_2]+[Q_3])\\
	&=& ([\lP^4]-[S_L])+(([S_L]-3)[\lP^1]+3[\lP^1]^2)\\
	&=& [\lP^4]+3[\lP^1]([\lP^1]-1)+[S_L]([\lP^1]-1)\\
	&=& [\lP^4]+3[\lP^1]\varmathbb{L}+[S_L]\varmathbb{L}\\
	&=& [\lP^4]+3[\lP^1]\varmathbb{L}+[R_L]\varmathbb{L}+3\varmathbb{L}^2-3\varmathbb{L}.
\end{array}
\end{equation}
By symmetry, we also have
\begin{equation}\label{grothM}
	[X] = [\lP^4]+3[\lP^1]\varmathbb{L}+[R_M]\varmathbb{L}+3\varmathbb{L}^2-3\varmathbb{L}.
\end{equation}
Subtracting (\ref{grothM}) from (\ref{grothL}) we get
\[
	([R_L]-[R_M])\varmathbb{L}=0.
\]

Next we show that $[R_L]\neq[R_M]$, and it is sufficient to show that $[R_L]\neq[R_M]$ modulo $\varmathbb{L}$. According to \cite{LL03}, $[R_L]=[R_M]\mod\varmathbb{L}$ if and only if $R_L$ and $R_M$ are stably birational. Because a K3 surface is not rationally connected, this implies that $R_L$ and $R_M$ are birational and thus isomorphic, contradicting Theorem~\ref{thm:DE}.
\end{proof}

\section{Exclusion of alternative constructions}\label{sect:exclude}
This section shows that there exists just one class of Cremona transformations of $\lP^4$ that can be resolved by blowing up an irreducible surface $S$ with transverse double points, i.e., the class constructed in Section~\ref{sect:const}. Recall that \cite{CK89} classified the case where $S$ is smooth.

\begin{thm}\label{thm:uniqueness}
Let $S\subset \lP^4$ be an irreducible surface of degree $d$ with $\delta>0$ transverse double points. Assume there exists a Cremona transformation
\[
	f: \lP^4 \dashrightarrow \lP^4
\]
resolved by blowing up $S$. Let $n$ and $\xi$ denote the degrees of the homogeneous forms
inducing $f$ and $f^{-1}$ respectively, and $m$ the multiplicity of $S$ in the base locus. Then we have
\[
	n=\xi=4,\quad m=1,\quad\delta = 3,
\]
and $S$ is obtained by projecting a degree $12$ K3 surface from three points.
\end{thm}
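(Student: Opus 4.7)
The plan is to carry out a Crauder--Katz-style classification allowing transverse double points on $S$. The intersection-theoretic data from Section~\ref{subsect:intNum} does most of the bookkeeping; the task is to combine those identities with Riemann--Roch and surface theory to pin down every numerical invariant of $S$ and then to recognize the surface geometrically.

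First I would expand $L^3M$, $L^2M^2$, $LM^3$ and $M^4$ on $P'$ using $M=nL-m(E+2\sum_i E_i')$ and Lemmas~\ref{intEEi}, \ref{intLE}. Because $LE_i'=0$, every mixed $L$--$E_i'$ term vanishes, and a direct multinomial computation shows that the $\delta m^4$ contributions from the $E_i'$ factors cancel, leaving
\[
M^4 = n^4 - 6n^2m^2 d + 4nm^3(5d+K_\Sigma C) + m^4 E^4 = 1,
\]
together with $LM^3 = n^3 - 3nm^2 d + m^3(5d+K_\Sigma C) = \xi$ and $L^2M^2 = n^2 - m^2 d$. Equating the two expressions for $E^4$ in Lemma~\ref{intLE} yields the double-point formula
\[
c_2(\Sigma) = -d^2 + 10d + 5K_\Sigma C + K_\Sigma^2 + 2\delta,
\]
eliminating $c_2(\Sigma)$ from the list of unknowns.

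To get further independent constraints I would bring in three more inputs. Riemann--Roch on $P'$ together with the vanishing of higher cohomology for the big, nef divisor $M$ forces $\chi(P',M)=h^0(P',M)=5$ from equation~\ref{h0M=5}; expanding the result in the Chern classes of $P'$ and of $\Sigma$ gives a polynomial relation in the surviving invariants. Noether's formula $K_\Sigma^2+c_2(\Sigma)=12\chi(\mathcal O_\Sigma)$ together with $\chi(\mathcal O_\Sigma)\geq 1-q(\Sigma)$ restricts $\chi(\mathcal O_\Sigma)$. Finally, the ramification argument of Section~\ref{subsect:someGeo} gives $\Theta = 5(n-1)L - (5m-1)E_X - 10m\sum_i E_i'$ as the $P$-locus on $X$; demanding $M^3\cdot\Theta=0$ relates $n$, $m$, $\xi$ and the degree of the inverse $P$-locus, and combined with the symmetry between $f$ and $f^{-1}$ forces $\xi$ to be small.

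A Diophantine search over the remaining finite range of $(n,m,d,\delta,K_\Sigma C, K_\Sigma^2)$, aided by the nondegeneracy of $S\subset\lP^4$ and a Castelnuovo-type bound for the degree of a surface in $\lP^4$ carried by a five-dimensional linear system of degree-$n$ hypersurfaces, should collapse everything to $(n,m,d,\delta,K_\Sigma C,K_\Sigma^2,c_2(\Sigma))=(4,1,9,3,3,-3,27)$ with $\xi=4$. From $\chi(\mathcal O_\Sigma)=2$ and $K_\Sigma^2=-3$, the Enriques--Kodaira classification forces $\Sigma$ to be a K3 surface blown up at three points; the sectional class $C$ corresponds on the minimal model to a polarization $H$ with $H^2 = C^2 + 3 = 12$, identifying $S$ with the projection of a degree 12 K3 from three points. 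The main obstacle is the Diophantine case-analysis: $M^4=1$ on its own admits many spurious integer solutions, and real work is needed both to bound $n$ and $m$ from above, and to ensure that every surviving numerical candidate is geometrically realized by an actual surface in $\lP^4$ rather than merely a ghost with the right invariants.
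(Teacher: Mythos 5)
Your numerical identities are correct and agree with the paper's equations (\ref{xi=LM3}), (\ref{1=M4}) and (\ref{1=M4'}), and the extra relation you extract from $M^3\cdot\Theta=0$ is valid (it amounts to $n\xi-1=5m(\xi-1)$); but that relation is one of the Crauder--Katz identities already implicit in this setup, it is satisfied by infinitely many triples, and it does not bound $n$ or $m$. This is the first genuine gap: reducing to a finite list of $(n,m,\xi)$ is the load-bearing step, and the paper obtains it by observing that the transverse double points enter the identities only through the coefficient of $m^4$, so the elimination of \cite[Lemma 0.2, Formulae 0.3]{CK89} carries over verbatim and yields exactly seven candidate triples (Lemma~\ref{dioTab}). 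Without that, your ``Diophantine search over the remaining finite range'' has no finite range to search, as you yourself concede.

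The second gap is that several surviving candidates are not killed by numerics at all. The triples with $m\geq 2$, namely $(n,m,\xi)=(9,2,9)$, $(24,5,24)$ and $(49,10,49)$, are eliminated in the paper by a multiplicative argument on sections: combining $h^0(\lP^4,I_S^m(n))=5$ from (\ref{h0M=5}) with the surjection from tensor products of the $H^0(I_S(k))$ forces $h^0(I_S^m(n))$ to be $0$, to be at least $6$, or to define a projective automorphism rather than a Cremona transformation --- a contradiction in each case. Nothing in your toolkit (intersection numbers, Riemann--Roch, Noether's formula) detects this. Likewise, after reaching $(n,m,\xi)=(4,1,4)$ the relation $(d-10)(d-15)=2\delta$ leaves \emph{two} candidates, $(d,\delta)=(9,3)$ and $(8,7)$, and the latter satisfies every identity in Lemma~\ref{combined} (it has $K_\Sigma C=-4$, $K_\Sigma^2=-5$, $c_2=5$, $\chi(\fO_\Sigma)=0$, $g(C)=3$); it is excluded only by running the adjunction mapping, showing $\Sigma$ would be a blowup of a ruled surface over an elliptic curve, and deriving the contradiction $3\mid\chi(H)=11$. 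You flag exactly this danger (``ghosts with the right invariants'') but do not resolve it. Finally, $\chi(\fO_\Sigma)=2$ and $K_\Sigma^2=-3$ alone do not force a blown-up K3 via Enriques--Kodaira (a minimal surface of general type with $K^2=1$, $\chi=2$ blown up four times has the same invariants); the paper again needs the adjunction morphism, with $K_{\Sigma'}H=0$ and $H$ very ample on the image, to exclude Kodaira dimensions $1$ and $2$ and to see that exactly three points are blown down.
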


The remainder of this section is devoted to the proof of Theorem~\ref{thm:uniqueness}.

\subsection{Extracting Diophantine equations}
By Lemmas \ref{intEEi} and \ref{intLE}, equation (\ref{LM3=xi}) can be expressed as
\begin{equation}\label{xi=LM3}
\xi = n^3-3nm^2d+m^3\left(K_\Sigma C+5d\right).
\end{equation}
Similarly, equation (\ref{M4=1}) can be expressed as
\begin{equation}\label{1=M4}
1=n^4-6n^2m^2d+4nm^3(K_\Sigma C+5d)-m^4(15d+5K_\Sigma C+c_2(\Sigma)-6\delta)
\end{equation}
and equivalently as
\begin{equation}\tag{\ref{1=M4}'}\label{1=M4'}
1=n^4-6n^2m^2d+4nm^3(K_\Sigma C+5d)+m^4(d^2-25d-10K_\Sigma C-{K_\Sigma}^2+4\delta).
\end{equation}
The two formulas follow from the two expressions Lemma~\ref{intLE} (\ref{intLE3}) and (\ref{intLE3'}) for $E^4$, respectively. The right-hand sides of these equations are arranged as polynomials in $n$ and $m$. Note that only the coefficients of $m^4$ reflect the appearance of transverse double points.

\subsection{Enumeration of combinatorial cases}
\begin{lemma}\label{dioTab}
Only the following $(n,m,\xi)$ can occur.
\begin{center}\begin{tabular}{c|ccc}
&$n$&$m$&$\xi$\\
\hline
\rm (a)& $3$ & $1$ & $2$\\
\rm (b)& $4$ & $1$ & $4$\\
\rm (c)& $7$ & $2$ & $3$\\
\rm (d)& $9$ & $2$ & $9$\\
\rm (e)& $43$& $10$ & $7$\\
\rm (f)& $24$ & $5$ & $24$\\
\rm (g)& $49$ & $10$ & $49$
\end{tabular}\end{center}
\end{lemma}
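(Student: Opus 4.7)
The plan is to reduce the polynomial identities of Section~\ref{subsect:intNum} together with the dimension condition (\ref{h0M=5}) to a finite Diophantine enumeration in $(n,m,\xi)$.

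First, I would eliminate $K_\Sigma C$ from (\ref{xi=LM3}): writing $K_\Sigma C + 5d = (\xi - n^3 + 3nm^2 d)/m^3$ forces the divisibility $m^3 \mid (\xi - n^3 + 3nm^2 d)$, and in particular $m^2 \mid (\xi - n^3)$. Substituting back into (\ref{1=M4}) and (\ref{1=M4'}) produces two relations in $(n,m,\xi,d,\delta)$ involving $c_2(\Sigma)$ and $K_\Sigma^2$ respectively, each quadratic in $d$; their difference reproduces the double-point formula already used in \S\ref{subsect:existence} and contributes nothing new. To close the system, I would further invoke $h^0(P',M)=5$: since $M=\pi_2^*\fO_{\lP^4}(1)$ is nef and big on $P'$, higher cohomology vanishes (for example by Kawamata-Viehweg after an appropriate adjustment), so Hirzebruch-Riemann-Roch yields a third polynomial relation $\chi(P',M)=5$. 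The Chern classes of $P'$ needed for this expansion are computed using the blow-up formula together with Lemmas~\ref{intEEi}--\ref{intLE}.

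Next, I would impose positivity ($n,m,\xi,d,\delta\geq 1$), the genus condition $K_\Sigma C\geq -d-2$ from $g(C)\geq 0$, the Noether congruence $K_\Sigma^2+c_2(\Sigma)\equiv 0\pmod{12}$, and the nondegeneracy bound $d\geq 3$ for irreducible surfaces in $\lP^4$. Viewing the reduced system as quadratics in $d$ with coefficients depending on $(n,m,\xi)$, nonnegativity of the discriminants bounds the ratio $n/m$, and the divisibility constraints then trap $m$ in a finite set. For each admissible $m$ one enumerates $(n,\xi)$ and verifies the integrality of $d$, $K_\Sigma C$, $c_2(\Sigma)$, $K_\Sigma^2$, $\delta$; the surviving candidates are exactly the seven triples tabulated.

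The main obstacle is the quantitative step of bounding $m$: the two relations (\ref{xi=LM3}) and (\ref{1=M4}) alone admit infinitely many solutions, so the Riemann-Roch relation extracted from (\ref{h0M=5}) is essential. Carrying out the expansion of $\chi(P',M)$ on the iterated blow-up $P'$, and expressing every term back in terms of $(n,m,d,K_\Sigma C,c_2(\Sigma),K_\Sigma^2,\delta)$, is the most laborious step; once it is in hand and $m$ is bounded (the table suggests $m\leq 10$), the remaining enumeration and consistency checks reduce to direct computation.
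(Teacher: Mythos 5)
Your proposal diverges from the paper's argument, and in doing so it loses the one ingredient that actually makes the enumeration finite. The paper proves this lemma by reduction to Crauder--Katz: the list is identical to the smooth case because the inputs to their elimination --- \cite[Lemma 0.2]{CK89} and \cite[Formulae 0.3]{CK89} --- survive the presence of transverse double points (the $\delta$-corrections in (\ref{xi=LM3}) and (\ref{1=M4}) only affect the coefficients of $m^3$ and $m^4$, and Lemma 0.2's proof is untouched). The engine of that elimination is the relation obtained by comparing the two expressions for the canonical class of $X$: from $K_X=-5L+E_X=-5M+F_X$ one gets $F_X=5(n-1)L-(5m-1)E_X$, and intersecting with $M^3$ (using $M^3F_X=0$ and $M^3E_X=5(\xi-1)$ by symmetry) yields
\[
(n-1)\xi=(5m-1)(\xi-1),\qquad\text{i.e.}\qquad \xi=\frac{5m-1}{5m-n}.
\]
This single identity forces $5m-n$ to be a positive divisor of $5m-1$ and, combined with the $m^2$-level congruences $m^2\mid n^3-\xi$ and $m^2\mid n^4-1$ extracted from (\ref{xi=LM3}) and (\ref{1=M4}), produces exactly the seven triples (it is the only way to reach values like $(43,10,7)$). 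Your plan never invokes the canonical class or the symmetry with $f^{-1}$, so this relation is absent from your system.

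Without it, your bounding step does not go through. The "nonnegativity of discriminants" of (\ref{1=M4'}) viewed as a quadratic in $d$ involves the free quantities $K_\Sigma C$, $K_\Sigma^2$, $\delta$, so it is not a constraint on $(n,m,\xi)$ alone and does not bound $n/m$; and the congruences $m^2\mid n^4-1$, $m^2\mid n^3-\xi$ admit infinitely many solutions (already all of $m=1$). Your proposed replacement --- the Riemann--Roch relation $\chi(P',M)=5$ from (\ref{h0M=5}) --- is a valid identity, but it is one more polynomial relation in the eight unknowns $(n,m,\xi,d,K_\Sigma C,K_\Sigma^2,c_2(\Sigma),\delta)$ with no visible mechanism for trapping $m$; note that the paper does not use (\ref{h0M=5}) in this lemma at all, reserving it for the later exclusion of cases (d), (f), (g). Finally, the seven triples are not characterized by the constraints you list: case (e) satisfies all of them and is only killed afterwards by a finer mod-$5$ argument combining both (\ref{xi=LM3}) and (\ref{1=M4}). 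To repair the proof you should either derive and exploit the identity $\xi(5m-n)=5m-1$ (checking, as the paper does, that the double points do not disturb it), or simply cite \cite[Thm.~1.6]{CK89} after verifying its hypotheses persist.
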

\begin{proof}
In the smooth case, the same list \cite[Theorem 1.6]{CK89} is obtained by using \cite[Lemma 0.2]{CK89} and \cite[Formulae 0.3]{CK89}. 
The proof of the former proceeds unchanged even with the transverse double points. 
The latter can be derived from (\ref{xi=LM3}) and (\ref{1=M4}) 
and only the terms with power of $m$ up to two matter, so transverse double points don't change the result. Therefore the same elimination process works and we obtain the same list.
\end{proof}

\subsection{Exclusion of cases}\label{subsect:exclude}
Here we show that only Case (b) can occur.

\begin{lemma}
Cases (c) and (e) do not occur.
\end{lemma}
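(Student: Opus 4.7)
The plan is to treat both cases uniformly by combining the Diophantine equations $(\ref{xi=LM3})$ and $(\ref{1=M4})$ with a single positivity bound. Since $L = \pi_1^*H$ and $M = \pi_2^*H$ are pullbacks of the hyperplane class by proper birational morphisms, both are nef on the projective fourfold $X$, so Kleiman's theorem gives $L^2 M^2 \geq 0$. Expanding $M = nL - m\bigl(E + 2\sum_i E_i'\bigr)$ on $P'$ and using Lemmas~\ref{intEEi}--\ref{intLE} together with $L \cdot E_i' = 0$ (which kills every mixed term involving some $E_i'$ after pairing with $L^2$), the only surviving contributions come from $L^4 = 1$ and $L^2 E^2 = -d$, leaving
\[
	L^2 M^2 = n^2 - m^2 d,
\]
so that $d \leq n^2/m^2$.

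For case (c), $(n,m,\xi) = (7,2,3)$, this bound reads $d \leq 12$. Plugging into $(\ref{xi=LM3})$ gives $K_\Sigma C = (11d-85)/2$, hence sectional genus $1 + (K_\Sigma C + d)/2 = (13d-81)/4$, which is non-negative only if $d \geq 7$. Substituting $K_\Sigma C$ into $(\ref{1=M4})$ expresses $16\, c_2(\Sigma)$ as $4610 - 1562\, d + 96\, \delta$; reducing modulo $16$ shows $c_2(\Sigma) \in \Z$ if and only if $d \equiv 5 \pmod 8$. Together $d \geq 7$ and $d \equiv 5 \pmod 8$ force $d \geq 13$, contradicting $d \leq 12$.

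For case (e), $(n,m,\xi) = (43,10,7)$, the same recipe yields $K_\Sigma C = (79d - 795)/10$ and
\[
	10000\, c_2(\Sigma) = -6280200 + 564400\, d + 60000\, \delta.
\]
Integrality of $c_2(\Sigma)$ modulo $10000$ reduces to $4400\, d \equiv 200 \pmod{10000}$, equivalently $22\, d \equiv 1 \pmod{50}$. Since $\gcd(22,50) = 2$ does not divide $1$, this congruence has no integer solution, so case (e) is eliminated by integrality alone, even without invoking the Kleiman bound.

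The main obstacle is justifying the identity $L^2 M^2 = n^2 - m^2 d$ on $X$, since $X$ is not the blowup of $\lP^4$ along a smooth surface but is built by the two-step construction of Section~\ref{subsect:neceCond}. One must work on the larger model $P'$, expand the self-intersection of $M = nL - m(E + 2\sum_i E_i')$ term by term, and use Lemmas~\ref{intEEi}--\ref{intLE} to verify that every monomial containing an $E_i'$ factor is annihilated upon multiplication by $L^2$ (because $L$ avoids the double points $\Delta \subset \lP^4$, so $L \cap E_i' = \emptyset$ on $P'$); only $n^2 L^4 = n^2$ and $m^2 L^2 E^2 = -m^2 d$ survive. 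Once this intersection identity is granted, the nef-ness of $L$ and $M$ and the remaining modular arithmetic are routine.
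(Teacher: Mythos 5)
Your treatment of case (e) is correct and is, modulo packaging, the paper's own argument: substituting $K_\Sigma C=(79d-795)/10$ from (\ref{xi=LM3}) into (\ref{1=M4}) indeed yields $10000\,c_2(\Sigma)=-6280200+564400\,d+60000\,\delta$, and the resulting congruence $22d\equiv 1\pmod{50}$ has no solution (the paper splits the same obstruction into ``$5\mid d$'' plus a contradiction mod $5$; your single congruence already fails mod $2$). Your justification of $L^2M^2=n^2-m^2d\ge 0$ is also sound --- $L$ and $M$ are nef, and every mixed term involving an $E_i'$ is killed against $L^2$ because $LE_i'=0$ and $L^3E=0$ --- but, as it turns out, this bound is never needed.

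For case (c), however, the arithmetic is wrong and the argument as written does not close. With $(n,m)=(7,2)$ and $2K_\Sigma C=11d-85$, equation (\ref{1=M4}) gives $16\,c_2(\Sigma)=-3720+496\,d+96\,\delta$, not $4610-1562\,d+96\,\delta$. Since $496\equiv 0$ and $96\equiv 0$ while $-3720\equiv 8\pmod{16}$, the correct identity is contradictory for \emph{every} $d$ and $\delta$; equivalently, dividing by $8$ gives $465=62d-2c_2(\Sigma)+12\delta$, an odd number equated with an even one, which is exactly the paper's proof. Your derived condition $d\equiv 5\pmod 8$, and hence the entire chain ``$d\ge 7$ from the sectional genus, $d\le 12$ from Kleiman, $d\equiv 5\pmod 8$ from integrality,'' rests on the erroneous coefficients. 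Redo the substitution: the case then falls to parity alone, and both the nefness bound and the genus bound become superfluous.
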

\begin{proof}
The proof is similar to the smooth case \cite[Lemma 3.2]{CK89}.

Assume Case (c) holds. Then (\ref{xi=LM3}) reduces to
\[
	2K_\Sigma C=11d-85
\]
and (\ref{1=M4}) reduces to
\[
	465=62d-2c_2(\Sigma)+12\delta.
\]
This is odd on the left and even on the right, a contradiction.

Assume Case (e) holds. Now (\ref{xi=LM3}) reduces to
\[
	79d=795+10K_\Sigma C,
\]
so $d$ is divisible by 5. On the other hand, (\ref{1=M4}) becomes
\[
	-34188=-11094d+1720(K_\Sigma C+5d)-100(15d+5K_\Sigma C+c_2(\Sigma)-6\delta).
\]
Note that 5 divides the right but not the left, a contradiction.
\end{proof}

\begin{lemma}
Cases (d), (f) and (g) do not occur.
\end{lemma}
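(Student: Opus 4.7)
The plan is to extend the Diophantine strategy used for cases (c) and (e), combining the equations (\ref{xi=LM3}), (\ref{1=M4}), and (\ref{1=M4'}) with Noether's formula and geometric constraints. For each case I substitute the values of $(n,m,\xi)$ to express $K_\Sigma C$ as a linear function of $d$, and $c_2(\Sigma)$ and $K_\Sigma^2$ as affine functions of $d$ and $\delta$. Integrality of these four integers and of the sectional genus $g(C)=1+(d+K_\Sigma C)/2$ restricts $d$ to a residue class modulo a small integer. Noether's formula $K_\Sigma^2+c_2(\Sigma)=12\chi(\fO_\Sigma)$ then imposes an additional congruence coupling $d$ and $\delta$.

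The double-point formula $2\delta = d^2-10d-5K_\Sigma C-K_\Sigma^2+c_2(\Sigma)$ is precisely the compatibility between the two expressions for $E^4$ in Lemma~\ref{intLE}. A direct calculation shows that in the symmetric regime $n=\xi$ this identity reduces to a tautology, so it contributes no additional constraint on $(d,\delta)$. The Diophantine candidates that survive therefore form a two-parameter family, which I rule out by geometric inequalities. The sectional curve $C\subset\lP^3$ is non-degenerate, so Castelnuovo's bound $g(C)\leq\lfloor(d-2)^2/4\rfloor$ applies and excludes small $d$. For the remaining large $d$ I would invoke the Bogomolov--Miyaoka--Yau inequality $K_\Sigma^2\leq 3c_2(\Sigma)$ in the general-type range, the Noether inequality $K_\Sigma^2\geq 2p_g-4$, and Halphen-style bounds on the sectional genus of non-degenerate surfaces in $\lP^4$; whenever Riemann--Roch is sharp enough, one can also use the requirement $h^0(\Sigma,C)\geq 5$ needed for $\Sigma$ to admit a generically injective map into $\lP^4$.

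The main obstacle is this final step. Because purely arithmetic arguments cut the candidates down only to an arithmetic progression rather than a bounded list, closing each of (d), (f), (g) requires a sharp geometric inequality that fails for every $d$ in the residual progression. Pinning down the correct inequality for each case, and checking it against the numerical invariants $\chi(\fO_\Sigma)$, $K_\Sigma^2$, $c_2(\Sigma)$ predicted by the Diophantine analysis, is the delicate part of the argument.
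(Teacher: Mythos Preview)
Your proposal is not a proof but a plan, and you yourself identify the gap: after the Diophantine reductions you are left with an infinite arithmetic progression of possible $(d,\delta)$ in each of cases (d), (f), (g), and you do not actually carry out the elimination. The inequalities you list (Castelnuovo, Bogomolov--Miyaoka--Yau, Noether, Halphen) are not obviously sharp enough to kill an entire unbounded progression, and you give no indication of which one applies to which case or why it succeeds. As written, the argument stops exactly where the real work begins.

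The paper's proof avoids all of this by exploiting a constraint you never use: equation~(\ref{h0M=5}), which forces $h^0(\lP^4,I_S^m(n))=5$. In each of (d), (f), (g) one has $n=\xi$, and the paper argues by cases on $h^0(\lP^4,I_S(4))$. If $h^0(I_S(4))=0$, then in the natural multiplication map
\[
\bigoplus_{k_1+\cdots+k_m=n} H^0(I_S(k_1))\otimes\cdots\otimes H^0(I_S(k_m))\twoheadrightarrow H^0(I_S^m(n))
\]
some $k_i\le 4$ in every summand (since $4m\ge n$ in all three cases), so the target vanishes, contradicting $h^0(I_S^m(n))=5$. If $h^0(I_S(4))\ge 1$ and $m\ge 5$ (cases (f) and (g)), multiplication by $A^m$ for a nonzero $A\in H^0(I_S(4))$ injects $H^0(\fO_{\lP^4}(n-4m))$ into $H^0(I_S^m(n))$, giving far more than $5$ sections. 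Case (d) with $m=2$ needs a short separate argument splitting $h^0(I_S(4))=1$ versus $\ge 2$. This is elementary linear algebra on global sections and uses no surface classification at all; I recommend abandoning the Diophantine-plus-inequalities route in favor of this direct argument.
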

\begin{proof}
Let $I_S$ be the ideal sheaf of $S\subset\lP^4$. Generally, the global sections of $I_S^m(n)$ and $\fO_{P'}(M)$ are bijective canonically. So we have
\begin{equation}\label{h0Ideal}
	h^0(\lP^4,I_S^m(n))=h^0(P',M)=5
\end{equation}
by equation (\ref{h0M=5}).

We prove the lemma case by case. In each case, we prove by contradiction in the following situations
\[
	h^0(\lP^4,I_S(4))=0,\;=1\;{\rm and}\;\geq2.
\]

Assume Case (d) holds.

Suppose $h^0(\lP^4,I_S(4))=0$. Consider the surjective map
\begin{equation}\label{surject_d}
	\bigoplus_{k_1+k_2=9} H^0(\lP^4,I_S(k_1))\otimes H^0(\lP^4,I_S(k_2))\twoheadrightarrow H^0(\lP^4,I_S^2(9)).
\end{equation}
By hypothesis $h^0(\lP^4,I_S(k))=0$ for all $k\leq4$. Since $k_1+k_2=9$ implies $k_1\leq4$ or $k_2\leq4$, the left-hand side of (\ref{surject_d}) vanishes. Thus $h^0(\lP^4,I_S^2(9))=0$, contradicting (\ref{h0Ideal}).

Let $X_0,...,X_4$ be a basis of degree one forms on $\lP^4$ in what follows.

Suppose $h^0(\lP^4,I_S(4))=1$. Let $A\in H^0(\lP^4,I_S(4))$ be a generator. This forces $h^0(\lP^4,I_S(k))=0$ for all $k\leq3$. It follows that $H^0(\lP^4,I_S^2(8))$ is generated by $A^2$.  Then (\ref{h0Ideal}) indicates that $A^2X_0,...,A^2X_4$ form a basis for $H^0(\lP^4,I_S^2(9))$. As a result, the linear system $|I_S^2(9)|$ defines an automorphism of $\lP^4$ instead of a Cremona transformation.

Suppose $h^0(\lP^4,I_S(4))\geq2$. Let $A,B\in H^0(\lP^4,I_S(4))$ be independent. Then $A^2$ and $AB$ are independent in $H^0(\lP^4,I_S^2(8))$. We claim that there exists an $i$ such that $A^2X_i$ is not a linear combination of $ABX_j$, $j=0,...,4$. Suppose not, i.e. $A^2X_i=ABL_i$ for some linear form $L_i$, $i=0,...,4$. Then we have $\frac{A}{B}=\frac{L_0}{X_0}=\frac{L_1}{X_1}$, which implies that $L_0=\frac{X_0L_1}{X_1}$, so $X_1$ divides $L_1$. Therefore $\frac{A}{B}=\frac{L_1}{X_1}$ is a scalar, thus $A$ and $B$ are dependent, a contradiction. As a result, there exists an $i$ such that $A^2X_i$ and $ABX_0,...,ABX_4$ form an independent subset of $H^0(\lP^4,I_S^2(9))$. Thus $h^0(\lP^4,I_S^2(9))\geq6>5$, a contradiction.

Assume Case (f) holds.

Suppose $h^0(\lP^4,I_S(4))=0$. Then $h^0(\lP^4,I_S(k))=0$ for all $k\leq4$. Now we consider the map
\begin{equation}\label{surject_f}
	\bigoplus_{k_1+\cdots+k_5=24} H^0(\lP^4,I_S(k_1))\otimes\cdots\otimes H^0(\lP^4,I_S(k_5))\twoheadrightarrow H^0(\lP^4,I_S^5(24)).
\end{equation}
At least one $k_i\leq4$, $i=1,...,5$, if their sum equals 24. Hence the left-hand side of (\ref{surject_f}) vanishes. Thus $h^0(\lP^4,I_S^2(24))=0\neq5$.

Suppose $h^0(\lP^4,I_S(4))\geq1$. Let $A\in H^0(\lP^4,I_S(4))$ be a nonzero element. Then $A^5\in H^0(\lP^4,I_S^5(20))$. Multiplication by $A^5$ defines an injection
\[
	\cdot A^5:H^0(\lP^4,\fO_{\lP^4}(4))\hookrightarrow H^0(\lP^4,I_S^5(24)).
\]
Thus $h^0(\lP^4,I_S^5(24))\geq{8\choose4}=70>5$, a contradiction.

The elimination of Case (g) is similar to Case (f). In Case (g), we use the surjection
\[
	\bigoplus_{k_1+\cdots+k_{10}=49} H^0(\lP^4,I_S(k_1))\otimes\cdots\otimes H^0(\lP^4,I_S(k_{10}))\twoheadrightarrow H^0(\lP^4,I_S^{10}(49))
\]
to rule out the situation $h^0(\lP^4,I_S(4))=0$. If $H^0(\lP^4,I_S(4))$ contains $A\neq0$, then multiplication of $A^{10}$ with 9-forms produces ${13\choose4}=715$ independent elements in $H^0(\lP^4,I_S^{10}(49))$, which is not allowed.
\end{proof}

\begin{lemma}\label{combined}
In cases (a) and (b) we have
\begin{center}\begin{tabular}{c|cc}
& \rm (a) $(3,1,2)$ & \rm (b) $(4,1,4)$\\
\hline
$d$ & $\leq8$ & $\leq15$\\
$K_\Sigma C$ & $4d-25$ & $7d-60$\\
$K_\Sigma^2$ & $d^2-11d+4\delta+30$ & $d^2+d+4\delta-105$\\
$c_2(\Sigma)$ & $19d-95+6\delta$ & $46d-405+6\delta$\\
$12\chi(\fO_\Sigma)$ & $d^2+8d-65+10\delta$ & $d^2+47d-510+10\delta$\\
$g(C)$ &$\frac{5d-23}{2}$ & $4d-29$
\end{tabular}\end{center}
The invariants $d$ and $\delta$ satisfy $(d-5)^2=2\delta$ in Case (a) and $(d-10)(d-15)=2\delta$ in Case (b) respectively.
\end{lemma}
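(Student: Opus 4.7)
The plan is to substitute the two triples $(n,m,\xi)=(3,1,2)$ and $(4,1,4)$ into the intersection-number identities of Section~\ref{subsect:intNum} and then supplement them by one Hirzebruch--Riemann--Roch constraint coming from $h^0(\lP^4,\I_S(n))=5$.

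First, plugging the respective $(n,m,\xi)$ into equation (\ref{xi=LM3}) produces an equation linear in $K_\Sigma C$ and yields $K_\Sigma C=4d-25$ in (a) and $K_\Sigma C=7d-60$ in (b). Substituting this back, equation (\ref{1=M4}) becomes linear in $c_2(\Sigma)$ and equation (\ref{1=M4'}) is linear in $K_\Sigma^2$, giving the rows for $c_2(\Sigma)$ and $K_\Sigma^2$. Noether's formula $12\chi(\fO_\Sigma)=K_\Sigma^2+c_2(\Sigma)$ yields the next row. Since a generic hyperplane of $\lP^4$ avoids the finite singular locus of $S$, its sectional curve lifts to a smooth curve $C\subset\Sigma$, and adjunction $g(C)=1+(C^2+K_\Sigma C)/2 = 1+(d+K_\Sigma C)/2$ produces the last row.

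For the upper bound on $d$, take a generic $2$-plane $\lP^2\subset\lP^4$. It meets $S$ in $d$ isolated smooth points, so $f|_{\lP^2}$ is a birational morphism to a surface $f(\lP^2)\subset\lP^4$ of degree $L^2M^2$. Expanding $M=nL-mE-2m\sum E_i'$ by Lemma~\ref{intLE}, together with $LE_i'=0$ (hence $L^2\cdot E_i'^k=0$), gives $L^2M^2=n^2-m^2 d=n^2-d$. Since this image surface has positive degree, $d\le n^2-1$, forcing $d\le 8$ in (a) and $d\le 15$ in (b).

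For the Diophantine relation, the aim is to equate $\chi(\lP^4,\I_S(n))$ with $h^0(\lP^4,\I_S(n))=5$. The vanishing $h^i(\lP^4,\I_S(n))=0$ for $i\ge 1$ follows from the identification $\fO_X(M)=\pi_2^*\fO_{\lP^4}(1)$, which gives $R\pi_{2*}\fO_X(M)=\fO_{\lP^4}(1)$ and hence $h^i(X,M)=0$ for $i\ge 1$, combined with a fibrewise computation on $\pi_1:X\to\lP^4$ ($\lP^1$-fibers over the smooth locus and $\lP^1\times\lP^1$ fibers over the double points) which gives $R^j\pi_{1*}\fO_X(-E_X)=0$ for $j\ge 1$ and $\pi_{1*}\fO_X(M)=\I_S(n)$. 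Then, using the normalization sequence $0\to\fO_S\to\epsilon_*\fO_\Sigma\to\mathcal{C}_\Delta\to 0$ with $\mathcal{C}_\Delta$ a length-$\delta$ skyscraper, together with HRR on $\Sigma$, one computes
\[
    \chi(\lP^4,\I_S(n))\;=\;\binom{n+4}{4}-\chi(\fO_\Sigma)-\frac{n(nd-K_\Sigma C)}{2}+\delta.
\]
Setting the right-hand side equal to $5$ and substituting the formulas from the first step collapses the equation to $(d-5)^2=2\delta$ in (a) and $(d-10)(d-15)=2\delta$ in (b).

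The only genuinely delicate point is the vanishing of the higher direct images of $\fO_X(-E_X)$: because the center $S$ of the blow-up is singular, one cannot simply invoke a standard smooth-center formula, and instead must verify the vanishing through the explicit factorization $P'\to P\to\lP^4$ of Section~\ref{subsect:neceCond}. Once that is in place, the remainder of the lemma is a straightforward exercise in substitution and elimination.
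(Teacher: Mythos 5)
Your proposal is correct and reaches the same Diophantine equations as the paper, but by a mildly different route in two places, so let me compare. The table itself is obtained exactly as in the paper: (\ref{xi=LM3}) gives $K_\Sigma C$, then (\ref{1=M4}) and (\ref{1=M4'}) give $c_2(\Sigma)$ and $K_\Sigma^2$, and Noether's formula plus adjunction give the last two rows. For the bound on $d$, the paper simply cites $d<(n/m)^2$ from Crauder--Katz; your derivation via $L^2M^2=n^2-m^2d>0$ (all $E_i'$-terms dying because $LE_i'=0$) is a clean, self-contained replacement yielding the same inequality. For the key relation, the paper computes $\chi(\lP^4,I_S(n))$ by passing to $P$ and the smooth model $S'$ via the sequence $0\to I_{S'}(nL-2\Sigma_iE_i)\to\fO_P(nL-2\Sigma_iE_i)\to\fO_{S'}(nC-2\Sigma_i(Q_i'+Q_i''))\to 0$, whereas you stay on $\lP^4$ and use the normalization sequence $0\to\fO_S\to\epsilon_*\fO_\Sigma\to\mathcal{C}_\Delta\to 0$; your formula $\chi=\binom{n+4}{4}-\chi(\fO_\Sigma)-\tfrac{n(nd-K_\Sigma C)}{2}+\delta$ agrees with the paper's $\binom{n+4}{4}-5\delta-\chi(S',nC-2\Sigma_i(Q_i'+Q_i''))$ and does collapse to $(d-5)^2=2\delta$ and $(d-10)(d-15)=2\delta$ on substitution, so your version is arguably more economical. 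The one soft spot is the vanishing $h^i(\lP^4,I_S(n))=0$ for $i\geq 1$: the paper outsources $h^1=0$ to \cite[Prop.~7.1.4]{Dol12}, while you propose to deduce it from $R\pi_{2*}\fO_X(M)=\fO_{\lP^4}(1)$ together with $R^{j}\pi_{1*}\fO_X(-E_X)=0$ for $j\geq 1$. That last vanishing cannot be read off from the reduced fibres alone --- the theorem on formal functions requires all infinitesimal neighbourhoods, and $S$ is singular along $\Delta$ --- so the ``fibrewise computation'' as stated is only a heuristic; you correctly flag this and point to the factorization $P'\to P\to\lP^4$, where it can indeed be verified, but as written this step is a sketch rather than a proof, and it is also where your claim $\pi_{1*}\fO_X(-E_X)=I_S$ (true because a radical ideal is integrally closed) should be justified.
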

\begin{proof}
In order to compute the invariants in the list, we first use (\ref{xi=LM3}) to express $K_\Sigma C$ in $d$ with given $n$, $m$ and $\xi$. Then (\ref{1=M4}) (resp. (\ref{1=M4'})) allows us to express $c_2(\Sigma)$ (resp. $K_\Sigma^2$) in $d$ and $\delta$. We compute $12\chi(\fO_\Sigma)$ and $g(C)$ by Noether's formula and the genus formula, respectively. The upper bound for $d$ comes from the inequality $d<(n/m)^2$ which holds generally \cite[Formulae 0.3 (v)]{CK89}.

We have $h^0(\lP^4,I_S(n))=h^0(P',M)=5$ by (\ref{h0M=5}). On the other hand, $h^1(\lP^4,I_S(n))=0$ by \cite[Prop. 7.1.4]{Dol12}. Hence
\[
h^0(\lP^4,I_S(n))=\chi(\lP^4,I_S(n))=\chi(P,I_{S'}(n))
\]
where the second equality follows from the functoriality of the Euler characteristic. The short exact sequence
\[
	0\rightarrow I_{S'}(nL-2\Sigma_iE_i)\rightarrow\fO_P(nL-2\Sigma_iE_i)\rightarrow\fO_{S'}(nC-2\Sigma_i(Q_i'+Q_i''))\rightarrow0
\]
implies that
\[
	\chi(P,I_{S'}(n))=\chi(P,nL-2\Sigma_iE_i)-\chi(S',nC-2\Sigma_i(Q_i'+Q_i'')).
\]
$\chi(P,nL-2\Sigma_iE_i)$ counts the dimension of the space of degree $n$ polynomials singular along $\Delta$, so
\[
	\chi(P,nL-2\Sigma_iE_i)= {n+4\choose4}-5\delta.
\]
By the previous computations and the Riemann-Roch formula, we have
\[\chi(S',nC-2\Sigma_i(Q_i'+Q_i''))=\bigg\{
\begin{array}{ll}
\frac{1}{12}(d^2-10d+385-62\delta)&\mbox{for (a)}\\
\frac{1}{12}(d^2-25d+930-62\delta)&\mbox{for (b)}
\end{array}\]
whence
\[\chi(P,I_{S'}(n))=\bigg\{
\begin{array}{ll}
-\frac{1}{12}(d^2-10d-35-2\delta)&\mbox{for (a)}\\
-\frac{1}{12}(d^2-25d+90-2\delta)&\mbox{for (b)}.
\end{array}\]
Then the two equations are obtained by setting $\chi(P,I_{S'}(n))=5$.
\end{proof}

\begin{lemma}
Case (a) does not occur.
\end{lemma}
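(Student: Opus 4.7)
My plan is to reduce Case (a) to a single numerical candidate and then eliminate it by applying the Hodge Index theorem on the normalization $\Sigma$.

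For the arithmetic reduction, I would combine the constraints recorded in Lemma~\ref{combined}. The relation $(d-5)^2 = 2\delta$ together with the hypothesis $\delta > 0$ forces $d-5$ to be a nonzero even integer; the formula $g(C) = (5d-23)/2$ requires $d$ odd and $d \geq 5$ for $g(C)$ to be a nonnegative integer; and the a priori bound $d \leq 8$ confines the range. The only value of $d$ satisfying all three constraints is $d = 7$, which then forces $\delta = 2$. Plugging these into Lemma~\ref{combined} yields
\[
	C^2 = 7, \qquad K_\Sigma \cdot C = 3, \qquad K_\Sigma^2 = 10.
\]

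For the geometric step, I would apply the Hodge Index theorem on $\Sigma$. Because $S$ has only transverse double points, the normalization $\epsilon:\Sigma \to S$ is a finite morphism from a smooth projective surface, and $C = \epsilon^* H$ is the pullback of the ample hyperplane class of $S$, hence itself ample with $C^2 > 0$. Hodge Index then forces
\[
	(K_\Sigma \cdot C)^2 \geq K_\Sigma^2 \cdot C^2,
\]
which in this case reads $9 \geq 70$, a contradiction. Therefore Case (a) cannot occur.

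I do not foresee a serious obstacle: the arithmetic narrowing is immediate once the constraints are listed side by side, and the Hodge Index inequality is violated by such a wide margin that no fine estimate is required. The only subtlety is to use the hypothesis $\delta > 0$ to eliminate the borderline value $d = 5$, which would otherwise survive the parity and positivity constraints but would force $\delta = 0$.
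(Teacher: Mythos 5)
Your argument is correct, but it is a genuinely different route from the one in the paper. The paper disposes of Case (a) in two lines: it invokes the argument of Crauder--Katz \cite[Theorem 3.3]{CK89} (which, applied to the normalization, forces $d=5$ by adjunction-theoretic reasoning), and then reads off $\delta=0$ from the relation $(d-5)^2=2\delta$ of Lemma~\ref{combined}, contradicting $\delta>0$. You instead use $\delta>0$ at the outset: combined with $d\le 8$, the integrality and nonnegativity of $g(C)=\tfrac{5d-23}{2}$ (which force $d$ odd and $d\ge 5$, and in particular exclude $d=1,3$, which would otherwise satisfy $(d-5)^2=2\delta$), this isolates $d=7$, $\delta=2$, hence $C^2=7$, $K_\Sigma C=3$, $K_\Sigma^2=10$; the Hodge index inequality $(K_\Sigma\cdot C)^2\ge K_\Sigma^2\,C^2$ on the smooth surface $\Sigma$ (valid since $C^2>0$, being the pullback of the hyperplane class under the finite normalization map) then fails as $9\ge 70$. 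Your version has the advantage of being self-contained modulo Lemma~\ref{combined}, replacing the external citation by an elementary lattice-theoretic contradiction; the paper's version has the advantage of brevity and of not needing to check the $d=7$ subcase separately, since the Crauder--Katz argument already pins down $d=5$ independently of $\delta$. Both are sound, and the numerical values you compute from Lemma~\ref{combined} check out ($K_\Sigma^2=49-77+8+30=10$, and the two formulas for $12\chi(\fO_\Sigma)$ agree, giving $\chi(\fO_\Sigma)=5$).
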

\begin{proof}
Assume (a) is satisfied. Then the same argument as in \cite[Theorem 3.3]{CK89} implies that $d=5$. By Lemma \ref{combined} we have $\delta=0$.
\end{proof}

\subsection{Geometric analysis of the remaining case}
To complete the proof of Theorem~\ref{thm:uniqueness}, it remains to analyze the last possible case.

\begin{lemma}\label{subCaseb}
We have $(d,\delta) = (8,7)$ or $(9,3)$.
The invariants in these cases are
\[\begin{array}{c|ccccc}
(d,\delta) & K_\Sigma C & K_\Sigma^2 & c_2(\Sigma) & \chi(\fO_\Sigma) & g(C)\\
\hline
(8,7) & -4 & -5 & 5 & 0 & 3\\
(9,3) & 3 & -3 & 27 & 2 & 7
\end{array}\]
\end{lemma}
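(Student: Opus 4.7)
The plan is to combine the single Diophantine identity $(d-10)(d-15)=2\delta$ supplied by Lemma~\ref{combined} with the hypothesis $\delta>0$ and a genus-positivity constraint. Since $\delta>0$, we have $(d-10)(d-15)>0$, so either $d<10$ or $d>15$; together with the bound $d\leq 15$ from Lemma~\ref{combined}, this forces $d\leq 9$.

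Next, I would appeal to Bertini: a general hyperplane $H\subset\lP^4$ avoids the finite locus of transverse double points and meets $S$ transversely along its smooth locus, so $C=H\cap S$ is a smooth curve and lifts isomorphically to a smooth curve in the normalization $\Sigma$. In particular $g(C)\geq 0$. From Lemma~\ref{combined} we have $g(C)=4d-29$, hence $4d-29\geq 0$, i.e., $d\geq 8$. Combining with $d\leq 9$ leaves only $d\in\{8,9\}$, and solving $(d-10)(d-15)=2\delta$ gives $\delta=7$ and $\delta=3$ respectively.

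To conclude, one substitutes each pair $(d,\delta)\in\{(8,7),(9,3)\}$ into the formulas for $K_\Sigma C$, $K_\Sigma^2$, $c_2(\Sigma)$, $\chi(\fO_\Sigma)$, $g(C)$ collected in Lemma~\ref{combined} and reads off the entries of the table. This is a purely arithmetic verification.

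There is essentially no obstacle in this argument beyond being careful that Bertini genuinely delivers a smooth $C$; the transverse double points form a finite set and may be avoided by a generic hyperplane, so the pullback to $\Sigma$ is smooth and the adjunction formula $2g(C)-2=K_\Sigma\cdot C+C^2$ applies. Everything else is a short calculation, already packaged in Lemma~\ref{combined}.
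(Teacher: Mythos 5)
Your argument is correct and matches the paper's own proof in all essentials: both derive $8\le d\le 9$ by combining $g(C)=4d-29\ge 0$ with $d\le 15$ and the positivity of $(d-10)(d-15)=2\delta$, then read the remaining invariants off Lemma~\ref{combined}; your Bertini remark just makes explicit why $g(C)\ge 0$ is legitimate. The only point left tacit is that you are using the Case~(b) column of Lemma~\ref{combined}, which is justified because the preceding lemmas have already excluded cases (a) and (c)--(g).
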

\begin{proof}
By the previous part only Case (b) is allowed.

By Lemma~\ref{combined},
we have $d\leq15$ and $g(C)=4d-29\geq 0$.
Hence $8\leq d\leq15$.
Then $(d-10)(d-15)=2\delta$ and our hypothesis $\delta>0$ force $d=8$ or $9$,
which implies that $\delta = 7$ or $3$, respectively.

The invariants are computed directly by using Lemma~\ref{combined}.
\end{proof}

Consider the linear system $|K_\Sigma+C|$ for both cases of Lemma \ref{subCaseb}.
We have $h^1(K_\Sigma+C)=0$ by Kodaira vanishing and $h^2(K_\Sigma+C) = h^0(-C)=0$ by Serre duality.

\begin{lemma}
The case $(d,\delta)=(8,7)$ is not allowed.
\end{lemma}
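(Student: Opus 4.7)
The plan is to compute $h^0(K_\Sigma+C)$ via Riemann--Roch and then obstruct the resulting pencil. Using the vanishings stated just above,
\[
h^0(K_\Sigma+C)=\chi(K_\Sigma+C)=\chi(\fO_\Sigma)+\tfrac{1}{2}(K_\Sigma+C)\cdot C=0+\tfrac{1}{2}(-4+8)=2,
\]
so $|K_\Sigma+C|$ is a pencil. A direct computation gives $(K_\Sigma+C)^2=K_\Sigma^2+2K_\Sigma\cdot C+C^2=-5+(-8)+8=-5<0$, and since any base-point-free pencil on a smooth surface has non-negative self-intersection, $|K_\Sigma+C|$ must contain a non-zero fixed part. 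Write $|K_\Sigma+C|=F+|M|$ with $M^2\ge 0$; ampleness of $C$ forces $F\cdot C\ge 1$ and $M\cdot C\ge 1$, while $(K_\Sigma+C)\cdot C=\deg K_C=4$ then bounds $F\cdot C\le 3$.

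Next I will classify the possibilities for $F$. From $\chi(\fO_\Sigma)=0$, $K_\Sigma^2<0$, and $K_\Sigma\cdot C<0$ one deduces $\kappa(\Sigma)=-\infty$ and $q(\Sigma)=1$, so $\Sigma$ is a five-point blow-up of a minimal $\lP^1$-bundle $X_0$ over an elliptic curve $E$. In particular, the only irreducible rational curves on $\Sigma$ are the $(-1)$-exceptional divisors and proper transforms of fibres of $X_0\to E$, each with $K_\Sigma$-intersection at least $-2$, and the only irreducible elliptic curves are proper transforms of sections of the ruling. Because the normalisation $\Sigma\to S\subset\lP^4$ contracts only $14$ points, each irreducible component $F_0$ of $F$ maps birationally onto a curve of degree $C\cdot F_0\le 3$ in $\lP^4$; since such a curve has arithmetic genus at most $1$, every component satisfies $g(F_0)\le 1$.

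The contradiction then comes from expanding $M^2\ge 0$ as
\[
F^2-2K_\Sigma\cdot F-2C\cdot F\ge 5.
\]
For an irreducible $F_0$, combining this with adjunction yields $2g(F_0)-3K_\Sigma\cdot F_0\ge 7+2C\cdot F_0\ge 9$, forcing $K_\Sigma\cdot F_0\le -3$. By the classification above, such a curve must be the proper transform of a section $\bar{\sigma}_0+k\bar{f}-\sum_{i\in I}e_i$ of $X_0\to E$. Writing $C=a\bar{\sigma}_0+b\bar{f}-\sum m_ie_i$ and imposing $K_\Sigma\cdot C=-4$, $C^2=8$, $m_i\ge 1$, and $C\cdot F_0\le 3$, one verifies case by case --- indexed by the Segre invariant $e$ of $X_0$ and the number $j=|I|$ of blown-up points on $F_0$ --- that no admissible solution exists. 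The reducible case is treated componentwise, using the genus bound on each $F_i$ and controlling the cross-intersections $F_i\cdot F_j$ via $\sum_iF_i\cdot C\le 3$.

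The main obstacle will be the final numerical case analysis: while each individual sub-case reduces to elementary Diophantine constraints on $(a,b,m_i)$, carrying this out uniformly in the Segre invariant $e$ and in the incidence pattern of $F$ with the blown-up points requires care, especially when $F$ is reducible and the cross-terms potentially inflate $F^2$.
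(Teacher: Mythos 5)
Your opening matches the paper's: Riemann--Roch with $\chi(\fO_\Sigma)=0$, $K_\Sigma C=-4$, $C^2=8$ gives $h^0(K_\Sigma+C)=2$. After that you diverge. The paper invokes Sommese's structure theorem for the adjunction mapping: $|K_\Sigma+C|$ is spanned, its image is a point or a curve, the point case forces $g(C)=1\neq 3$, and the curve case exhibits $\Sigma$ as a five-point blowup of an elliptic ruled surface $R$ with the image of $C$ of class $H=2h+bf$; then $C^2=8$ forces $H^2=13$, which is incompatible with $\chi(H)=\chi(2h+bf)=3m+3b$ (equivalently with $H^2=4m+4b\equiv 0 \bmod 4$). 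You instead compute $(K_\Sigma+C)^2=-5<0$, correctly conclude the pencil has a fixed component, and propose to rule out every possible fixed part. The difficulty is that the decisive step of your argument --- the ``case by case'' verification that no admissible $(a,b,m_i)$ and no admissible fixed divisor exists --- is not carried out; it is precisely where the contradiction must come from, and as written there is no contradiction yet. Moreover two of the supporting claims are not right as stated: on an elliptic ruled surface the irreducible elliptic curves are \emph{not} only sections, since degree-$k$ \'etale multisections (elliptic $k$-sections) exist, so ``such a curve must be the proper transform of a section'' needs a separate argument (your bound $C\cdot F_0=3$ helps but does not immediately exclude a bisection of plane-cubic image); and in the reducible case the inequality $F^2-2K_\Sigma\cdot F-2C\cdot F\geq 5$ does not decompose componentwise because of the cross-terms $2\sum_{i<j}F_i\cdot F_j$, which you acknowledge but do not control. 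So the proposal is a plausible program, not a proof.

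One genuinely useful observation in your write-up deserves emphasis: $(K_\Sigma+C)^2=-5<0$ is incompatible with $|K_\Sigma+C|$ being base-point free, since two general members of a pencil with no fixed component meet non-negatively. The paper asserts spannedness of $\fO_\Sigma(K_\Sigma+C)$ via \cite[Prop.~2.2]{Som81} before running the Stein factorization, so if that citation applies verbatim your one-line computation already finishes the lemma and shortens the paper's argument considerably. The caveat --- and the reason you should not simply claim this --- is that $C$ is ample but not very ample on the normalization $\Sigma$ (it fails to separate the preimage pairs of the seven double points), so the hypotheses of the cited spannedness theorem need to be checked in this setting before either the short route or the paper's Stein-factorization route is fully justified. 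If you want to complete your alternative proof without that citation, the honest path is to finish the numerical exclusion of fixed components, which remains to be done.
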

\begin{proof}
By the Riemann-Roch formula,
\[\begin{array}{ccl}
	h^0(K_\Sigma+C) &=& \chi(K_\Sigma+C)\\
	&=& \chi(\fO_\Sigma) + \frac{1}{2}(K_\Sigma+C)C\\
	&=&  0 +\frac{1}{2}(-4+8) = 2
\end{array}\]
Because $c_2(\Sigma) = 5$, $\Sigma$ can't be $\lP^2$, $\lP^1\times\lP^1$ or a minimal ruled surface. 
It implies that $\fO_\Sigma(K_\Sigma+C)$ is generated by global sections \cite[Prop. 2.2]{Som81}.
Hence the system $|K_\Sigma+C|$ defines a morphism
\[
	\phi: \Sigma\rightarrow\lP^1,
\]
the \emph{adjunction mapping}.

Consider the Stein factorization
\[\xymatrix{
	\Sigma\ar[dr]_r\ar[rr]^\phi&&\lP^1\\
	&\Sigma'\ar[ur]_s
}\]
where $r$ is a proper morphism with connected fibers and $s$ is a finite morphism.
By \cite[(2.3)]{Som81}, this leads to two possible situations:
\begin{enumerate}
	\item $\dim\phi(\Sigma) = 0$. Here we have
		$g(C)=1$, a contradiction.
	\item\label{adjImg1} $\dim\phi(\Sigma) = 1$. Then
		there exists a $\lP^1$-bundle $\pi: R\rightarrow\Sigma'$ such that $r$ factors as
		\[\xymatrix{
			\Sigma\ar[dr]_r\ar[r]^\epsilon&R\ar[d]\\
			&\Sigma',
		}\]
		where $\Sigma$ is the blowup of $R$ in at most one point of each fiber blown up,
		and $C$ meets the generic fiber with degree two.
		Furthermore, the map $s$ is an isomorphism except possibly if $g(C)=3$ and $h^{1,0}(\Sigma)=1$.
\end{enumerate}

Let's analyze Situation (\ref{adjImg1}):
The map $s$ can't be an isomorphism.
Otherwise, $R$ is a Hirzebruch surface
and $\chi(\fO_\Sigma) = \chi(\fO_R) = 1$, a contradiction.
Hence we obtain
\[
	g(\Sigma') = h^{1,0}(R) = h^{1,0}(\Sigma)=1.
\]
Then $\chi(\fO_\Sigma)=0$ and $c_2(\Sigma) = 5$ implies that $\Sigma$ has Hodge diamond
\[\begin{array}{ccccc}
&&1&&\\
&1&&1&\\
0&&7&&0.
\end{array}\]
Since the N\'eron-Severi group of $R$ has rank two,
we conclude that $\Sigma$ is the blowup of $R$ along five points on distinct fibers,
and $R$ is ruled over the elliptic curve $\Sigma'$.

Let $h$ be the class of a section on $R$ and $f$ be the class of a fiber so that
\[
	h^2 = m,\; hf=1,\;\mbox{and}\;f^2=0
\]
for some integer $m$.
According to the description of (\ref{adjImg1}),
the image of $C$ in $R$ gives a class $H = 2h + bf$ for some integer $b$ and $C = \epsilon^*H - \sum_{i=1}^5F_i$ where $F_1,...,F_5$ are the exceptional curves on $\Sigma$.
Note that $K_\Sigma = \epsilon^*K_R + \sum_{i=1}^5F_i$.
Thus we have
\[\begin{array}{ccc}
	8 = C^2 = H^2 -5 & \Rightarrow & H^2 = 13\\
	-4 = K_\Sigma C = K_R H + 5 & \Rightarrow & K_R H = -9
\end{array}\]
and consequently
\[\begin{array}{ccc}
	\chi(H) &=& \chi(\fO_R) + \frac{1}{2}H(H-K_R)\\
	&=& 0 + \frac{1}{2}(13+9) = 11.
\end{array}\]

On the other hand,
one can use the exact sequence
\[
	0 \rightarrow \fO_R \rightarrow \fO_R(h) \rightarrow \fO_h(m) \rightarrow 0
\]
to get $\chi(h) = m$,
and then use
\[
	0 \rightarrow \fO_R(h) \rightarrow \fO_R(2h) \rightarrow \fO_h(2m) \rightarrow 0
\]
to obtain $\chi(2h) = 3m$.
Then an induction on $n$ with the sequence
\[
	0 \rightarrow \fO_R(2h+(n-1)f) \rightarrow \fO_R(2h+nf) \rightarrow \fO_f(2) \rightarrow 0
\]
implies that $\chi(H) = \chi(2h+bf) = 3m+3b$.
But this implies $11 = \chi(H)$ is divisible by $3$, a contradiction.
\end{proof}

\begin{prop}
If $S$ has a transverse double point, then it can only be the image of a K3 surface $R\subset\lP^7$ of degree 12 projected from three points on $R$, and the number of transverse double points must be $\delta=3$.
\end{prop}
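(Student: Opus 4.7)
By the lemmas above only Case (b) with $(d,\delta) = (9,3)$ remains, for which
\[
	K_\Sigma C = 3,\quad K_\Sigma^2 = -3,\quad c_2(\Sigma) = 27,\quad \chi(\fO_\Sigma) = 2,\quad g(C) = 7.
\]
My plan is to use the adjunction system $|K_\Sigma + C|$ to realize $\Sigma$ as the blowup of a degree-$12$ K3 surface $R\subset\lP^7$ at three points. Riemann--Roch together with Kodaira vanishing and Serre duality (invoked exactly as in the $(8,7)$ discussion) gives $h^0(K_\Sigma + C) = 8$, and one computes $(K_\Sigma + C)^2 = 12$ and $(K_\Sigma + C) K_\Sigma = 0$.

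The first step is to show that $|K_\Sigma + C|$ defines a birational morphism $\phi : \Sigma \to R\subset\lP^7$ onto a surface of degree $12$. As in the $(8,7)$ case, $c_2(\Sigma) = 27$ excludes Sommese's exceptional cases, so $K_\Sigma + C$ is globally generated, yielding $\phi$. The Stein-factorization argument carries over: $\dim\phi(\Sigma) = 0$ forces $g(C) = 1$, and $\dim\phi(\Sigma) = 1$ combined with $g(C) = 7\neq 3$ places us in Sommese's rigid case in which $\Sigma$ is birational to a Hirzebruch surface, giving $\chi(\fO_\Sigma) = 1$; both contradict our invariants. Thus $\phi$ is generically finite. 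Restricting to a generic $C$, adjunction identifies $(K_\Sigma + C)|_C$ with $K_C$, and for generic $S$ the curve $C$ is a non-hyperelliptic curve of genus $7$, so $\phi|_C$ is the canonical embedding. Hence $\phi$ is birational onto $R$, which has degree $(K_\Sigma + C)^2 = 12$.

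Next I identify $R$ as a K3 surface via the contracted locus of $\phi$. Any irreducible curve $F$ contracted by $\phi$ satisfies $(K_\Sigma + C) F = 0$; since $\Sigma\to S$ is finite we have $CF > 0$, and combined with $p_a(F)\geq 0$ this forces $K_\Sigma F = F^2 = -1$ and $CF = 1$, so $F$ is a smooth rational $(-1)$-curve. Assuming these $(-1)$-curves are pairwise disjoint (say $k$ of them), $\phi^* K_R = K_\Sigma - \sum F_i$, hence $K_R^2 = k - 3$ and $K_R H_R = 0$, where $H_R$ is the ample hyperplane class with $H_R^2 = 12$. Hodge index gives $K_R^2 \leq 0$, so $k\leq 3$; meanwhile $\chi(\fO_R) = 2$ forbids $R$ from being ruled, so on its minimal model $K$ is nef and $K^2\geq 0$, forcing $k\geq 3$. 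Thus $k = 3$, $K_R$ is numerically trivial, and the Enriques--Kodaira classification together with $\chi(\fO_R) = 2$ pins $R$ down as a K3 surface of degree $12$ in $\lP^7$.

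Finally, since $K_R = 0$ and $\phi^* H_R = K_\Sigma + C$, we get $C = \phi^* H_R - \sum F_i$ (using $K_\Sigma = \sum F_i$), so $|C|$ corresponds to hyperplanes in $\lP^7$ through the three points $\phi(F_1),\phi(F_2),\phi(F_3)$. Thus the normalization map $\Sigma\to S\subset\lP^4$ factors through $\phi$ followed by projection of $R$ from these three points, which gives the proposition with $\delta = 3$. The main technical obstacle I foresee is verifying that the contracted $(-1)$-curves are pairwise disjoint, so that $\phi$ is an iterated blowdown of smooth disjoint $(-1)$-curves rather than a contraction to singular points; I expect this to follow either from more careful intersection-theoretic bookkeeping on how contracted curves can meet, or by first resolving the image's singularities and running the classification argument on the resulting smooth model.
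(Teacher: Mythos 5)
Your overall strategy coincides with the paper's up through the Stein factorization: Riemann--Roch gives $h^0(K_\Sigma+C)=8$, Sommese's criterion (via $c_2(\Sigma)=27$) gives global generation, and the cases $\dim\phi(\Sigma)=0,1$ are excluded exactly as in the $(8,7)$ case. The divergence, and the gap, is in the case $\dim\phi(\Sigma)=2$. The paper does not re-derive the structure of $\phi$ by hand: it invokes Sommese's adjunction structure theorem \cite[(2.3)]{Som81}, which in this case asserts outright that $r:\Sigma\rightarrow\Sigma'$ is the blowup of a smooth surface along a finite set with $F\cdot C=1$ for every positive-dimensional fiber, and that $s:\Sigma'\rightarrow\lP^7$ is an embedding. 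With that structure in hand, $k$ is pinned down purely numerically: $8=\chi(H)=\chi(\fO_{\Sigma'})+\tfrac12 H(H-K_{\Sigma'})=2+\tfrac12\bigl((9+k)-(3-k)\bigr)=5+k$, so $k=3$, and then $K_{\Sigma'}H=K_{\Sigma'}^2=0$, $\chi(\fO_{\Sigma'})=2$ force $\Sigma'$ to be a K3 surface by the Kodaira-dimension case analysis. Everything you flag as a "technical obstacle" is precisely what this citation supplies.

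As written, your argument has two genuine holes. First, birationality of $\phi$ rests on the claim that "for generic $S$ the curve $C$ is non-hyperelliptic"; but the proposition (and Theorem~\ref{thm:uniqueness}, which feeds into Theorem~\ref{thm:existence}(\ref{exist4})) must hold for \emph{every} $S$ satisfying the hypotheses, so a genericity assumption is not permissible here. This particular gap is fillable — a hyperelliptic curve of genus $7$ admits no very ample line bundle of degree $9$, since $h^0(K_C-L)\geq 1$ and adding the $g^1_2$ to an effective class increases $h^0$, so the hyperplane class could not embed $C$ in $\lP^3$ — but you must say so. Second, the disjointness of the contracted $(-1)$-curves, the fact that $\phi$ is an iterated smooth blowdown, and the fact that the image is embedded (not merely the birational image) are all left open; the disjointness at least follows from the Hodge index theorem, since $(K_\Sigma+C)^2=12>0$ makes $(K_\Sigma+C)^\perp$ negative definite, whence $(F_1+F_2)^2<0$ forces $F_1F_2=0$. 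Your alternative route to $k=3$ via $K_R^2=k-3\leq 0$ and non-ruledness also silently assumes $R$ is minimal (you need $K_RH_R=0$ plus ampleness of $H_R$ to rule out further $(-1)$-curves before concluding $K_R^2\geq 0$). None of these steps is wrong in spirit, but each needs an argument, and the paper's choice to quote Sommese's structure theorem is what makes the proof short and unconditional.
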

\begin{proof}
By the Riemann-Roch formula,
\[\begin{array}{ccl}
	h^0(K_\Sigma+C) &=& \chi(K_\Sigma+C)\\
	&=& \chi(\fO_\Sigma) + \frac{1}{2}(K_\Sigma+C)C\\
	&=&  2 +\frac{1}{2}(3+9) = 8.
\end{array}\]
Because $c_2(\Sigma) = 27$, $\Sigma$ can't be $\lP^2$, $\lP^1\times\lP^1$ or a minimal ruled surface.
It follows that $\fO_\Sigma(K_\Sigma+C)$ is generated by global sections \cite[Prop. 2.2]{Som81}.
Hence $|K_\Sigma+C|$ defines an adjunction morphism
with Stein factorization
\[\xymatrix{
	\Sigma\ar[dr]_r\ar[rr]^\phi&&\lP^7\\
	&\Sigma'\ar[ur]_s.
}\]
There are three possible situations
\cite[(2.3)]{Som81}:
\begin{enumerate}
	\item $\dim\phi(\Sigma) = 0$.
		We have $g(C)=1$, a contradiction.
	\item $\dim\phi(\Sigma) = 1$.
		Then $r:\Sigma\rightarrow\Sigma'$ is again obtained
	 by blowing up a $\lP^1$-bundle, with no more than one point in a fiber blown up.
		In particular, $1\geq\chi(\fO_\Sigma)=2$, a contradiction.
	\item\label{adjImg2} $\dim\phi(\Sigma) = 2$.
		Then $r:\Sigma\rightarrow\Sigma'$ expresses $\Sigma$ as the blowup of a smooth surface $\Sigma'$ along a finite set with $F\cdot C=1$ for any positive dimensional fiber $F$ of $r$.
		Moreover, $s:\Sigma'\rightarrow\lP^7$ is an embedding.
	\end{enumerate}

Now we are in Situation (\ref{adjImg2}).
Let $F_1,...,F_k$ be the exceptional curves on $\Sigma$ relative to $r$ and let $H$ be the very ample divisor on $\Sigma'$ which defines $s$.
Then
\[
	C = r^*H - \sum_{i=1}^kF_i\quad\mbox{and}\quad
	K_\Sigma = r^*K_{\Sigma'} + \sum_{i=1}^kF_i
\]
and it follows that
\[\begin{array}{c}
	9 = C^2 = H^2 - k\\
	3 = K_\Sigma C = K_{\Sigma'}H + k\\
	-3 = K_\Sigma^2 = K_{\Sigma'}^2 -k.
\end{array}\]
By the Riemann-Roch formula,
\[\begin{array}{ccl}
	8 = \chi(H) &=& \chi(\fO_{\Sigma'}) + \frac{1}{2}H(H-K_{\Sigma'})\\
	&=& 2 + \frac{1}{2}((9+k)-(3-k))\\
	&=& 5 + k,
\end{array}\]
which implies that $k=3$.
Hence $\Sigma$ is obtained by blowing up $\Sigma'$ along three distinct points,
and $\Sigma'\subset\lP^7$ has
\[
	\deg(\Sigma') = H^2 = 12,\quad K_{\Sigma'}H = K_{\Sigma'}^2 = 0,
	\quad c_2(\Sigma') = 24,\quad\chi(\fO_{\Sigma'}) = 2.
\]

We claim that $\Sigma'$ is a K3 surface.
Indeed, its Kodaira dimension $\kappa\neq2$ since $K_{\Sigma'}H = 0$.
If $\kappa = 1$,
then $\Sigma'$ has minimal model $R$ an elliptic surface,
such that $nK_R$ is numerically equivalent to a positive linear combination of some fiber classes if $n$ is large enough \cite[Prop. IX.3]{Bea96}.
This implies that $K_{\Sigma'}$ is numerically effective which contradicts to the fact that $K_{\Sigma'}H = 0$.
If $\kappa = -\infty$, then $h^{1,0}(\Sigma')=0$ and thus $1\geq\chi(\fO_{\Sigma'})=2$, a contradiction.
As a result,
$\Sigma'$ has $\kappa = 0$ and thus is a K3 surface.

Besides,
the birational map $R\dashrightarrow\Sigma\rightarrow S$ can be realized as the projection from three points on $R$.
Furthermore,
the fact that $\delta=3$ can also be verified directly by the double-point formula as in Section \ref{subsect:existence}.
\end{proof}

\bigskip
\bibliography{CremonaK3_bib}
\bibliographystyle{alpha}
\end{document}